\documentclass[conference]{IEEEtran}
\usepackage{times}

\usepackage[numbers]{natbib}
\usepackage{multicol}
\usepackage[pagebackref=true,breaklinks=true,colorlinks,bookmarks=false]{hyperref}
\usepackage{amsthm}
\usepackage{adjustbox}
\usepackage{stackengine}
\usepackage{makecell}
\usepackage{caption}


\usepackage{comment}
\usepackage{siunitx}
\usepackage{relsize}
\usepackage{ifthen}
\usepackage[colorinlistoftodos]{todonotes}






\usepackage[vlined,ruled,linesnumbered]{algorithm2e}
\usepackage{graphics} 
\usepackage{rotating}
\usepackage{color}
\usepackage{enumerate}
\usepackage[T1]{fontenc}
\usepackage{psfrag}
\usepackage{epsfig} 
\usepackage{booktabs}
\usepackage{graphicx,url}
\usepackage{multirow}
\usepackage{array}
\usepackage{latexsym}
\usepackage{amsfonts}
\usepackage{amsmath}
\usepackage{amssymb}
\usepackage{mathtools}
\usepackage{xstring}
\usepackage[noend]{algorithmic}
\usepackage{multirow}
\usepackage{xcolor}
\usepackage{prettyref}
\usepackage{flexisym}
\usepackage{bigdelim}
\usepackage{breqn} 
\usepackage{listings}

\usepackage{enumitem}
\usepackage{xspace}
\usepackage{bm}
\graphicspath{{./figures/}}
\usepackage{tikz}
\usetikzlibrary{matrix,calc}
\usepackage{tabularx}
\usepackage{subcaption}
\usepackage{float}

\usepackage{pgfplots}
\usepackage{pgfplotstable}
\usepgfplotslibrary{groupplots} 
\pgfplotsset{compat=1.18}       


%

\usepackage{mdwlist}

\makecompactlist{itemize}{stditemize}




\newrefformat{prob}{Problem\,\ref{#1}}
\newrefformat{def}{Definition\,\ref{#1}}
\newrefformat{sec}{Section\,\ref{#1}}
\newrefformat{sub}{Section\,\ref{#1}}
\newrefformat{prop}{Proposition\,\ref{#1}}
\newrefformat{app}{Appendix\,\ref{#1}}
\newrefformat{alg}{Algorithm\,\ref{#1}}
\newrefformat{cor}{Corollary\,\ref{#1}}
\newrefformat{thm}{Theorem\,\ref{#1}}
\newrefformat{lem}{Lemma\,\ref{#1}}
\newrefformat{fig}{Fig.\,\ref{#1}}
\newrefformat{tab}{Table\,\ref{#1}}
\newrefformat{assump}{Assumption\,\ref{#1}}

\newtheorem{theorem}{Theorem}

\newtheorem{assumption}[theorem]{Assumption}
\newtheorem{definition}[theorem]{Definition}
\newtheorem{proposition}[theorem]{Proposition}
\newtheorem{remark}[theorem]{Remark}
\newtheorem{example}[theorem]{Example}

\newcommand{\cf}{\emph{cf.}\xspace}

\newcommand{\bdmath}{\begin{dmath}}
\newcommand{\edmath}{\end{dmath}}
\newcommand{\beq}{\begin{equation}}
\newcommand{\eeq}{\end{equation}}
\newcommand{\bdm}{\begin{displaymath}}
\newcommand{\edm}{\end{displaymath}}
\newcommand{\bea}{\begin{eqnarray}}
\newcommand{\eea}{\end{eqnarray}}
\newcommand{\beal}{\beq \begin{array}{ll}}
\newcommand{\eeal}{\end{array} \eeq}
\newcommand{\beas}{\begin{eqnarray*}}
\newcommand{\eeas}{\end{eqnarray*}}
\newcommand{\ba}{\begin{array}}
\newcommand{\ea}{\end{array}}
\newcommand{\bit}{\begin{itemize}}
\newcommand{\eit}{\end{itemize}}
\newcommand{\ben}{\begin{enumerate}}
\newcommand{\een}{\end{enumerate}}



\newcommand{\calE}{{\cal E}}

\newcommand{\calI}{{\cal I}}







\newcommand{\hide}[1]{}

\newcommand{\hiddenText}{{\color{gray} hidden text.}}
\newcommand{\hideWithText}[1]{\hiddenText}



\newcommand{\subject}{\textup{ subject to }}

\DeclareMathOperator*{\argmin}{arg\,min}


\newcommand{\norm}[1]{\left\| #1 \right\|}

\newcommand{\tran}{^{\mathsf{T}}}

\newcommand{\Real}[1]{ { {\mathbb R}^{#1} } }









\newcommand{\scenario}[1]{{\smaller \sf#1}\xspace}


%
%

\newcommand{\blue}[1]{{\color{blue}#1}}

\newcommand{\linkToPdf}[1]{\href{#1}{\blue{(pdf)}}}
\newcommand{\linkToPpt}[1]{\href{#1}{\blue{(ppt)}}}
\newcommand{\linkToCode}[1]{\href{#1}{\blue{(code)}}}
\newcommand{\linkToWeb}[1]{\href{#1}{\blue{(web)}}}
\newcommand{\linkToVideo}[1]{\href{#1}{\blue{(video)}}}
\newcommand{\linkToMedia}[1]{\href{#1}{\blue{(media)}}}
\newcommand{\award}[1]{\xspace} 




\newcommand{\KwParameter}{\KwData}


\renewcommand{\norm}[1]{\left\lVert #1 \right\rVert}

\newcommand{\bmat}{\left[ \begin{array}}
\newcommand{\emat}{\end{array}\right]}



\newcommand{\snopt}{\textsc{Snopt}\xspace}
\newcommand{\ipopt}{\textsc{Ipopt}\xspace}

\newcommand{\crisp}{\scenario{CRISP}}


\begin{document}

\title{On the Surprising Robustness of Sequential Convex Optimization for Contact-Implicit Motion Planning}

\author{\authorblockN{Yulin Li\authorrefmark{1}$^,$\authorrefmark{2},
Haoyu Han\authorrefmark{2},
Shucheng Kang\authorrefmark{2},
Jun Ma\authorrefmark{1},
and 
Heng Yang\authorrefmark{2}
}
\authorblockA{
\authorblockA{\authorrefmark{1}The Hong Kong University of Science and Technology}
\authorrefmark{2}Harvard University}
\vspace{1mm}
\authorblockA{\texttt{\url{https://computationalrobotics.seas.harvard.edu/CRISP}}}
}

\newcommand\blfootnote[1]{%
  \begingroup
  \renewcommand\thefootnote{}\footnote{#1}%
  \addtocounter{footnote}{-1}%
  \endgroup
}

\twocolumn[{%
\renewcommand\twocolumn[1][]{#1}%
\maketitle

\begin{minipage}{\textwidth}
\includegraphics[width=\linewidth,trim={0cm 0cm 0cm 0cm}, clip]{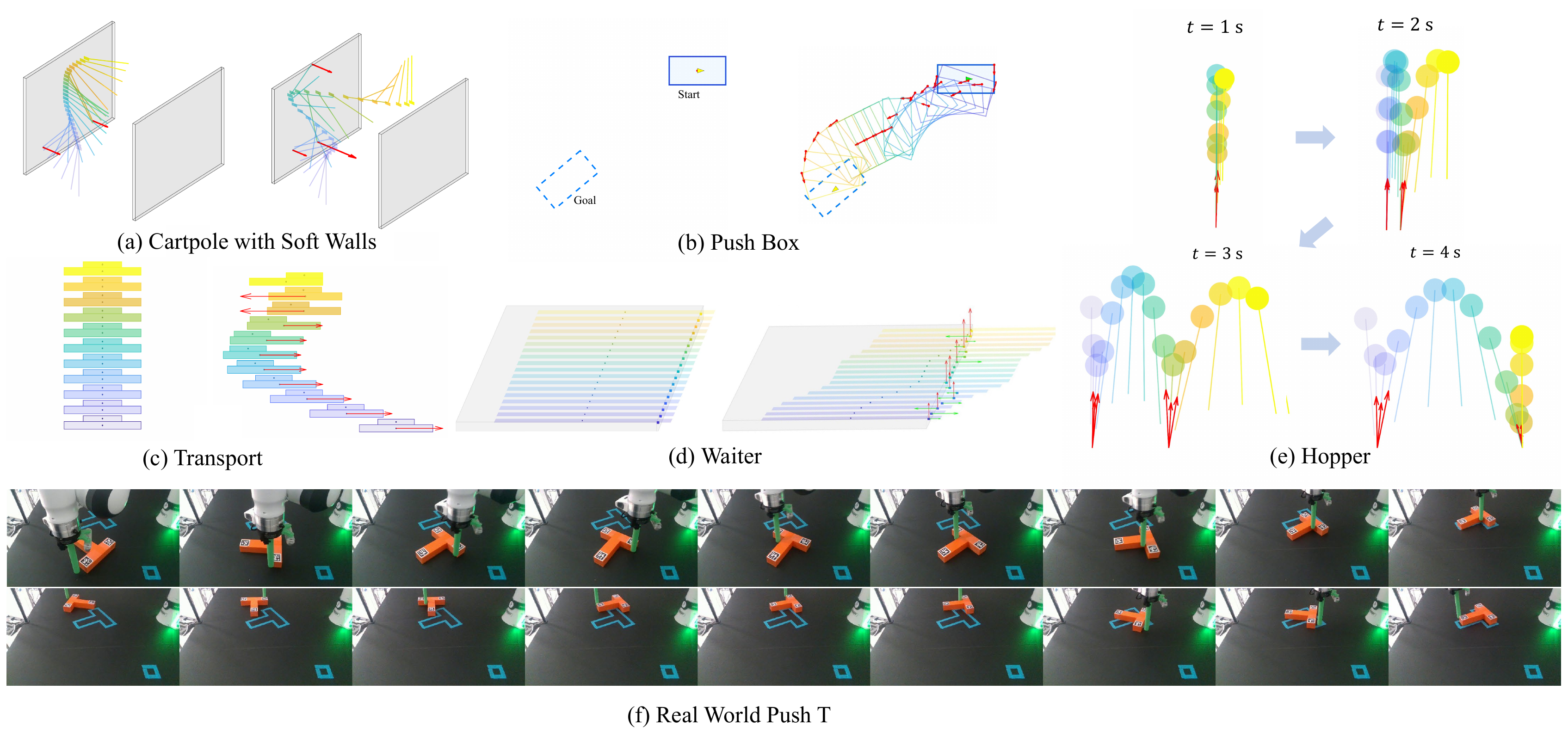}
\captionof{figure}{\crisp computes entirely new contact sequences from naive and even all-zero initializations. For (a), (b), (c), and (d), the left side shows the initial trajectories and the right side displays the optimized trajectory from \crisp. For (e) the hopper problem, the initial guess is a free-fall motion released from the origin. The color gradient represents the progression of time (from blue to yellow). For (f), we implement the policy derived from \crisp in a Model Predictive Control (MPC) framework for real-world push tasks. Detailed descriptions of these tasks are provided in \S\ref{sec:exp}.
\label{fig:demos}}
\vspace{1mm}
\end{minipage}
}]

\IEEEpeerreviewmaketitle


\begin{abstract}

Contact-implicit motion planning---embedding contact sequencing as implicit complementarity constraints---holds the promise of leveraging continuous optimization to discover new contact patterns online. Nevertheless, the resulting optimization, being an instance of Mathematical Programming with Complementary Constraints, fails the classical constraint qualifications that are crucial for the convergence of popular numerical solvers.  
We present \underline{r}obust \underline{c}ontact-\underline{i}mplicit motion planning with \underline{s}equential convex \underline{p}rogramming (\scenario{CRISP}), a solver that departs from the usual primal-dual algorithmic framework but instead focuses only on the primal problem. \crisp solves a convex quadratic program with an adaptive trust region radius at each iteration, and its convergence is evaluated by a merit function using weighted $\ell_1$ penalty. We (\emph{i}) prove sufficient conditions on \crisp's convergence to first-order stationary points of the merit function; (\emph{ii}) release a high-performance C++ implementation of \crisp with a generic nonlinear programming interface; and (\emph{iii}) demonstrate \crisp's surprising robustness in solving contact-implicit planning with naive initializations. In fact, \crisp solves several contact-implicit problems with an all-zero initialization.

\end{abstract}


\section{Introduction}
\label{sec:introduction}

Robots must intelligently establish and disengage \emph{contacts} to successfully perform complex tasks in the physical world, such as manipulation of daily objects and locomotion in rough terrains.
However, the hybrid nature of combining continuous dynamics with discrete contact events, the discontinuities in force profiles, and the potential for stick-slip transitions, make it notoriously difficult to ``plan through contact''.

\textbf{Contact-implicit motion planning.} Among the many efforts for motion planning through contact (see \S\ref{sec:relatedworks} for a review), the so-called \emph{contact-implicit} formulation stood out as a particularly popular and promising approach~\cite{posa2014ijrr-traopt-directmethod-contact,kerim2006TO}. This formulation distinguishes itself by integrating contact dynamics into the motion planning framework through the introduction of \emph{complementarity constraints}, which allow simultaneous optimization of the state-control trajectories and the contact forces without explicit (and discrete) mode switching (\cf~\eqref{eq:contact-implicit}). 
However, the contact-implicit formulation circumvents explicit mode switching at the price of arriving at an optimization problem known as \emph{Mathematical Programming with Complementarity Constraints} (MPCC). Although they ``look like'' smooth and continuous optimization problems, MPCC problems are known to fail the classical constraint qualifications (CQs) at \emph{all feasible points} (e.g., Linear Independence Constraint Qualification and Mangasarian-Fromovitz Constraint Qualification)~\cite{MOR-scheel2000mathematical,SIOPT-Scholtes-2001,SIOPT-Ye-MPCC-2016}. 
To make this concrete, we provide the geometric intuition through a simple MPCC problem.
\begin{example}[Geometric Intuition of MPCC]\label{example:toy}
Consider the following two-dimensional MPCC problem:
\begin{subequations}\label{eq:toy}
    \begin{align}
\min_{(x_1,x_2) \in \mathbb{R}^2} &\; f(x) := x_1^2 + x_2^2 \\
\subject &\; g_1(x) := x_1 \geq 0, \\
&\; g_2(x) := x_2 \geq 0, \\
&\; g_3(x) := x_1 \cdot x_2 = 0. \label{eq:toy:cc}
\end{align}
\end{subequations}
where~\eqref{eq:toy:cc} is a complementarity constraint.
As depicted in~\prettyref{fig:toy_example},
the objective level sets are circles in $\mathbb{R}^2$, and
the feasible set consists of the union of the two coordinate axes in the first quadrant
\bea
\{x\in\Real{2} \mid x_1 \geq 0, x_2 =0\} \cup \{x \in \Real{2} \mid x_1 = 0, x_2 \geq 0\}.
\eea
The optimal solution of problem~\prettyref{eq:toy} lies at the origin $(0,0)$.
\begin{figure}[t]
    \centering
    \includegraphics[width=0.8\linewidth, trim={0cm 12cm 25cm 0cm}, clip]{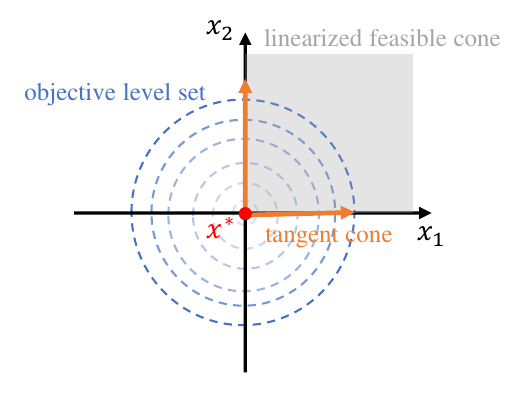}
    \vspace{-6mm}
    \caption{
    Geometric intuition of MPCC through Example~\prettyref{example:toy}.}
    \label{fig:toy_example}
    \vspace{-4mm}
\end{figure}

To understand the local geometry at the origin, we need to characterize two types of ``cones'' around the origin.

(a) The \emph{tangent cone (TC)} at a point is defined as the set of tangent vectors of all curves approaching the point from the feasible region~\cite{nocedal1999springer-numerical-optimization}. 
Thus, the tangent cone at \((0, 0)\) is:
\bea
\begin{split}
    TC=\{d \in \mathbb{R}^2 \mid d_1 \geq 0, d_2 = 0\} \ \cup\  \\ \{d \in \Real{2} \mid d_1 = 0, d_2 \geq 0\}.
\end{split}
\eea
This consists of two directions: one along the positive \(x_1\)-axis and one along the positive \(x_2\)-axis, see Fig.~\ref{fig:toy_example}.

(b) The \emph{linearized feasible cone (LFC)} is formed by all active constraints at that point~\cite{nocedal1999springer-numerical-optimization}:
\bea
LFC = \left\{d \in \mathbb{R}^2 \;\bigg|\; 
\begin{array}{l}
\nabla g_i^\top d \geq 0,i=1,2 \\
\nabla g_j^\top d = 0 , j=3
\end{array}
\right\}.
\eea
Thus, the linearized feasible cone at \((0, 0)\) is:
\bea
LFC = \{d \in \Real{2} \mid d_1 \geq 0, d_2 \geq 0\},
\eea
i.e., the entire first quadrant. Therefore, we conclude that $TC \subsetneq LFC$---the TC is a strict subset of the LFC at $(0,0)$.

We remark that the TC is ``geometric'', while the LFC is ``algebraic''. While the definition of LFC involves the algebraic constraints and their linearizations, the definition of TC characterizes the local geometry regardless of how the feasible set is algebraically parameterized. 
\end{example}

After noticing the gap between the TC and the LFC, the reader might wonder why this is a problem. The reason lies in that, the equivalence of TC and LFC---typically ensured by CQs---is a prerequisite to establish the Karush–Kuhn–Tucker (KKT) conditions for local optimality \cite{nocedal1999springer-numerical-optimization}. Moreover, almost all primal-dual nonlinear programming (NLP) solvers (e.g., \snopt~\cite{SNOPT}, \ipopt~\cite{ipopt}) rely on the KKT optimality conditions---they search for a pair of primal and dual variables satisfying the KKT conditions. Therefore, the inherent absence of CQs and the gap between TC and LFC of the MPCC problems break the theoretical foundations for primal-dual algorithms. The numerical consequence of this, as we will show in \S\ref{sec:exp}, is that primal-dual solvers can exhibit poor convergence, stuck in infeasible solutions, and/or numerical instabilities when solving contact-implicit MPCC problems. 


\textbf{Relaxation?} An extensively studied strategy to mitigate the failure of CQs is to ``relax'' the complementarity constraint. In~\cite{MP-FACCHINEI-1999}, using Example~\ref{example:toy} as illustration, the authors relaxed the complementarity constraint~\eqref{eq:toy:cc} as $x_1\cdot x_2 = \alpha$ for $\alpha >0$. In~\cite{SIOPT-Scholtes-2001,MOR-STEFAN-2001}, a different relaxation scheme where $x_1 \cdot x_2 \leq \alpha$ for $\alpha > 0$ is proposed. In both relaxations, CQs are restored (i.e., no gap between the TC and the LFC) and KKT optimality conditions are reassured. However, since $\alpha=0$ is the original non-relaxed problem we want to solve, a homotopy scheme is needed to start with $\alpha$ very large and gradually push $\alpha$ to zero. There are two issues with the relaxation approach. First, one needs to solve a sequence of nonconvex problems instead of just one, making this approach computationally expensive. Second, while the problem with $\alpha$ very large is generally well conditioned, as $\alpha \downarrow 0$, numerical issues appear again. For these reasons, to the best of our knowledge, there does not exist a well-accepted implementation of the relaxation approach. Recently in robotics, \cite{le2024fast} targeted at linear complementarity constraints---which are indeed KKT optimality conditions of a lower-level convex quadratic program (QP)---and designed a bilevel optimization algorithm where the lower-level QP is solved in a differentiable way to provide gradient for the upper-level trajectory optimization problem. Crucially, to ensure the QP is differentiable with respect to contact, relaxation (or smoothing) is applied again, but this time to the interior point algorithm used for solving the QP. However, (a) it is unclear whether the approach extends to nonlinear complementarity constraints studied in our paper; and (b) the implementation provided by~\cite{le2024fast} does not follow a generic nonlinear programming interface. Therefore, we did not benchmark against~\cite{le2024fast} in \S\ref{sec:exp}.

\emph{Can we solve contact-implicit motion planning in a numerically robust way, without relaxation?}
\begin{figure*}[tp]
    \centering
    \includegraphics[width=1\linewidth, trim={0cm 0cm 0cm 0cm}, clip]{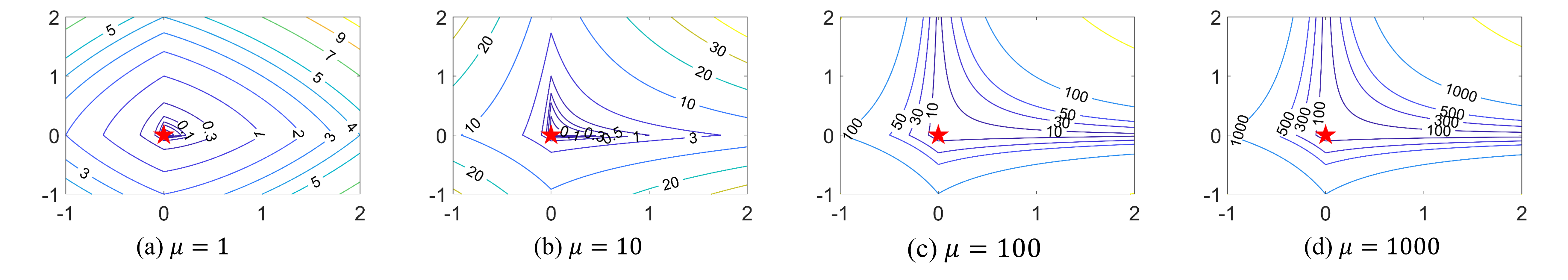}
    \vspace{-4mm}
    \caption{
    Depiction of the $\ell_1$ penalty merit function for Example~\prettyref{example:toy} with different penalty parameters $\mu$.}
    \label{fig:toy_example_merit_function}
    \vspace{-4mm}
\end{figure*}

\textbf{Contributions.}
The answer is rather discouraging if considering a generic nonconvex MPCC problem, as highlighted by the mathematical optimization literature~\cite{fletcher2000practical, SIOPT-FLETCHER-2006, OMS-Fletcher-2004}. Nevertheless, contact-implicit planning problems in robotics possess a unique property: the objective function is often \emph{convex} (and quadratic), as the goal typically involves tracking a trajectory or reaching a target. Formally, we consider the problem:
\begin{subequations}\label{eq:contact-implicit}
    \begin{align}
    \min_{v, \lambda} \quad & J(v, \lambda) \\
    \subject \quad & f(v, \lambda) = 0, \label{eq:ci:dynamics}\\
    & c_i(v, \lambda) \geq 0, \ i \in \calI  \label{eq:ci:ineq}\\
    & c_i(v, \lambda) = 0, \ i \in \calE \label{eq:ci:eq} \\
    & 0 \leq \phi(v,\lambda) \perp \lambda \geq 0,\label{eq:ci:complementarity}
\end{align}
\end{subequations}
where $v$ includes the trajectory of both robot state and control inputs, $\lambda$ denotes the complementarity variables typically associated with contact forces. We assume the objective function $J(v, \lambda)$ is convex---usually a quadratic loss function of tracking errors and control efforts. Constraint \eqref{eq:ci:dynamics} represents the nonlinear system dynamics. Constraints \eqref{eq:ci:ineq} and \eqref{eq:ci:eq} are general inequality and equality constraints. The nonlinear complementarity constraint \eqref{eq:ci:complementarity} enforces the relationship between $\lambda$ and the nonlinear function $\phi(v,\lambda)$. The expression ``$\phi(x,\lambda) \perp \lambda$'' should be interpreted as $\phi_i(x,\lambda) \cdot \lambda_i = 0$, or its equivalent form $\phi_i(x,\lambda) \cdot \lambda_i \leq 0$ as suggested by~\cite{SIOPT-FLETCHER-2006}, for every entry of $\phi(\cdot)$ and $\lambda$. It should be emphasized that problem~\eqref{eq:contact-implicit} is nonlinear and nonconvex, as we impose no restrictions on the convexity of the constraints. This formulation is classified as a nonlinear complementarity problem (NCP) due to constraints \eqref{eq:ci:complementarity}. A key distinction between NCP and linear complementarity problem (LCP) lies in the nonlinearity present in both the complementarity constraints and other constraints. While one can use $s = \phi(v,\lambda)$ to recast complementarity constraints as $\lambda \perp s$, this transformation merely shifts the nonlinearity to the additional constraint $s = \phi(v,\lambda)$. In the linear complementarity problem (LCP) literature, both $\phi$ and other constraints are typically required to be linear (as in~\cite{aydinoglu2023icra-realtime-multicontact-mpc-admm} and LCQpow~\cite{Hall_2024}). In contrast, \crisp is capable of handling generic nonlinear dynamics, as demonstrated in our examples.
We will give concrete examples of~\eqref{eq:contact-implicit} in \S\ref{sec:exp}. For now, let us work with the generic template~\eqref{eq:contact-implicit}.

Our major contribution is to depart from the usual primal-dual algorithmic framework---due to the failure of CQs in MPCC---and propose a \emph{primal-only} algorithm named \crisp (\textbf{\underline{R}}obust \textbf{\underline{C}}ontact-\textbf{\underline{I}}mplicit motion planning with \textbf{\underline{S}}equential convex \textbf{\underline{P}}rogramming). \crisp features a merit function with weighted $\ell_1$ penalty to measure primal feasibility and objective reduction. It solves a series of trust-region convex quadratic programs (QPs) formulated using local second-order information of the objective and \emph{first-order} information of the constraints. In stark contrast with the well-known sequential quadratic programming (SQP) framework, \crisp (a) does not require second-order information of the constraints, and (b) does not maintain and update dual variables. Since the objective function is convex, the inner-loop QP subproblem inherits convexity by construction. These make \crisp both simple to implement and computationally efficient. 

One then wonders whether a simple algorithm like \crisp can offer any convergence guarantees. We show that the convexity of the objective function allows us to prove sufficient conditions under which \crisp can guarantee convergence to the stationary points of the merit function. Importantly, the sufficient conditions are numerically verifiable, and hence the convergence can be ``certified'' from the numerical iterations of \crisp. Moreover, we show that the local minima of the merit function are indeed the local minimizers of the original problem if feasible for the original problem to close the loop.

Finally, we open-source a high-performance C++ implementation of \crisp. Our C++ implementation follows a generic nonlinear programming interface where the user defines objective function and constraints. We leverage automatic differentiation to obtain gradient and Hessian information. We then apply \crisp to solve six contact-implicit motion planning problems and benchmark its performance against both generic and robotics-specific solvers. We believe the way we model some of the contact-implicit planning problems is also new. Our numerical results demonstrate that \crisp consistently generates non-trivial and entirely new contact sequences from naive and even all-zero initializations, while existing solvers can struggle to find even feasible solutions. 

To summarize, our contributions are:
\begin{itemize}
    \item \textbf{Theory and Algorithm}. We propose a primal-only algorithm for contact-implicit motion planning called \crisp and provide theoretical guarantees.
    \item \textbf{Implementation}. We release an open-source high-performance C++ implementation of \crisp.
    \item \textbf{Benchmark}. We model six contact-rich planning problems using the contact-implicit formulation and benchmark \crisp against existing solvers.
\end{itemize}



\textbf{Paper organization.} We present the theory and algorithm for \crisp in \S\ref{sec:method}, its implementation details in \S\ref{sec:implementation}, and benchmark results in \S\ref{sec:exp}. We postpone the related work review to \S\ref{sec:relatedworks} and conclude in \S\ref{sec:conclusion}. Appendix and project website provide proofs and the detailed contact-implicit formulation for the five planning problems, as well as the animations of numerically computed motion trajectories. 


\section{\crisp: Theory and Algorithm}
\label{sec:method}
Clearly, the contact-implicit planning problem~\eqref{eq:contact-implicit} can be transformed into a generic NLP problem, which also aligns with our implemented C++ solver. Let $x = [v, \lambda]$, we denote the NLP formulation of~\eqref{eq:contact-implicit} as:
\begin{subequations}\label{eq:problem_general} 
    \begin{align}
        \min_x &\quad J(x) \\
        \subject &\quad c_i(x) = 0, \quad i \in \mathcal{E} \label{eq:general:eq}\\
        &\quad c_i(x) \ge 0. \quad i \in \mathcal{I} \label{eq:general:ineq}
    \end{align}
\end{subequations}
Here, we absorb all other constraints into the general template $c$ for simplicity.
\textbf{Primal-only merit function.}
From a primal-only perspective, the core optimization challenge is to balance descent of the objective function with satisfaction of the constraints~\cite{nocedal1999springer-numerical-optimization}. 
In \crisp, we adopt the following merit function with weighted $\ell_1$ penalty to evaluate the quality of a point $x$:
\begin{align}
    \label{eq:merit-func}
    \phi_1(x;\mu) \triangleq J(x) +  \sum_{i \in \mathcal{E}} \mu_i| c_i(x) | + \sum_{i \in \mathcal{I}} \mu_i [c_i(x)]^-,
\end{align}
where $\mu_i > 0$ is the penalty parameter for each constraint $i$. The notation $[c_i]^- \triangleq \max\{0, -c_i\}$ is used to properly penalize inequality constraint violations. 

\begin{example}[Merit Function for the Toy Problem]\label{ex:toy:merit}
    Continuing Example~\ref{example:toy}, we plot the merit function for the toy MPCC problem~\eqref{eq:toy} with different penalty parameters in Fig. \ref{fig:toy_example_merit_function}. Evidently, the stationary points of the merit function align with the global minimum of Example~\ref{example:toy} at the origin.
\end{example}


\begin{remark}[Merit Function]
    The $\ell_1$ penalty function is a class of nonsmooth penalty functions that has demonstrated remarkable success in practical applications. It is exact in the sense that, given an appropriate penalty parameter and under mild constraint qualifications, local optimizers of the original problem are also minimizers of the merit function~\cite{Fletcher1983}.

    We need to emphasize that, due to the lack of CQs in the MPCC problem, it is not fully clear whether minimization of the merit function~\eqref{eq:merit-func} has the same type of guarantees as in the case where CQs hold. However, the intuition that the merit function balances objective reduction and constraint satisfaction still holds, and according to Example~\ref{ex:toy:merit}, the minimization of the merit function leads to the optimal solution, at least for the toy problem. We give a sufficient condition of local optimality in Proposition \ref{prop:local_minima} and leave a precise investigation of the relationship between the merit function and the original problem in the case of MPCC for future work.
    

    A difference of our merit function from the usual $\ell_1$ penalty is that we maintain a separate penalty parameter $\mu_i$ for each constraint. This individualized penalization strategy often leads to superior convergence in practice.

\end{remark}

\textbf{Convex subproblem.} 
To minimize the merit function~\prettyref{eq:merit-func}, we adopt a sequential optimization strategy that solves a series of subproblems by approximating a local model of the merit function in the vicinity of the current iteration. Formally, our subproblem seeks to minimize the following quadratic model:
\begin{align}\label{eq:subproblem-nonsmooth}
\min_{p_k} \, q_{\mu,k}(p_k) := & \quad J_k + \nabla J_k^\top p_k 
+ \frac{1}{2} p_k^\top \nabla^2_{xx} J_k \, p_k 
\notag\\
&\quad + \sum_{i \in \mathcal{E}}\mu_i \lvert c_i(x_k) + \nabla c_i(x_k)^\top p_k \rvert \notag\\ &\quad + \sum_{i \in \calI}\mu_i  \big[c_i(x_k) + \nabla c_i(x_k)^\top p_k\big]^-,
\end{align}
where $J_k$, $\nabla J_k$, and $\nabla_{xx}^2 J_k$ represent the objective function's value, gradient, and Hessian at the current iterate $x_k$, respectively. The constraints $c_i$ are linearized around $x_k$. In the subproblem~\eqref{eq:subproblem-nonsmooth}, ``$p_k$'' is known as the \emph{trial step}.

From~\eqref{eq:subproblem-nonsmooth}, the readers see that our method diverges from the subproblem formulation in the classical SQP methods~\cite{sqp-method, fletcher2000practical, jordana2023stagewise}, in the sense that~\eqref{eq:subproblem-nonsmooth} refrains from computing second-order derivatives of the constraints and the Lagrangian. Instead, we linearize the constraints while maintaining a quadratic model of the objective function only. 
Our approach aligns with the concept of sequential convex programming discussed in \cite{scp-01,scp-02,scp-03,mao2018successive,mao2016successiveconvexification}, including the well-known robotics package \textsc{TrajOpt} \cite{trajopt}. \textsc{TrajOpt} implements sequential convex programming for trajectory optimization and convex collision detection. However, it lacks user-friendly APIs to include contact constraints and does not support weighted $\ell_1$ merit functions or second-order corrections, nor does it provide convergence analysis. Beyond the SCP foundation, we aim for a high-performance implementation featuring carefully engineered modules and a generalized interface designed for broad contact-rich motion planning applications, while providing convergence guarantees in the contact-implicit context where CQs assumptions do not hold.

Leveraging the convexity of the objective function $J$ and introducing the penalty term, the subproblem~\eqref{eq:subproblem-nonsmooth} is always convex and feasible. This offers several advantages. First, it reduces the computational burden associated with calculating second-order derivatives of constraints, which is particularly beneficial for problems with complex constraints (e.g., in robotics). Second, the convexity and guaranteed feasibility of the subproblem not only facilitate efficient solution of~\eqref{eq:subproblem-nonsmooth} using well-established convex optimization techniques but also enhance the robustness of the overall algorithm.

\begin{remark}[Elastic Mode]
    In conventional sequential optimization methods, such as SQP, constraints are typically linearized directly from the original problem. However, when applied to MPCC, this approach inevitably leads to infeasible subproblems~\cite{SIOPT-FLETCHER-2006,SIOPT-ANITESCU-2005}. To address this issue, some numerical solvers, like \snopt, implement an ``Elastic Mode'' where they add penalty variables to move constraints into the objective function upon detecting subproblem infeasibility.
    Our approach aligns with the motivation of the elastic mode in the sense that the merit function~\eqref{eq:merit-func} is ``elastic'' from the very beginning because the constraints are penalized in the objective function.
    This strategy ensures our subproblems are always feasible by design.
\end{remark}



\textbf{Globalization.}
The only issue with the subproblem~\eqref{eq:subproblem-nonsmooth} is that it may be unbounded from below (i.e., the optimal value is $-\infty$).
To avoid this issue, we introduce the trust-region approach following \cite{nocedal1999springer-numerical-optimization}. Specifically, we constrain the $\ell_{\infty}$ norm of the trial step $p_k$. Formally, the trust-region subproblem with $\ell_\infty$ norm constraint becomes:
\begin{subequations} \label{eq:subproblem-smooth}
    \begin{align}
        \min_{p_k, v, w, t} & \displaystyle \,\,\,J_k + \nabla J_k\tran p_k  + \frac{1}{2} p_k\tran \nabla_{xx}^2 J_k p_k \nonumber \\
        & + \mu \sum_{i \in \mathcal{E}} (v_i + w_i) + \mu \sum_{i \in \mathcal{I}} t_i  \\
        \text{subject to} &\,\, \nabla c_i(x_k)\tran p_k + c_i(x_k) = v_i - w_i, \ i \in \mathcal{E} \label{eq:subproblem-smooth-eq}\\
        & \,\, \nabla c_i(x_k)\tran p_k + c_i(x_k) \ge -t_i, \ i \in \mathcal{I} \label{eq:subproblem-smooth-ineq}\\
        & \,\, v, w, t \ge 0 \\
        & \,\, \|p_k\|_\infty \le \Delta_k, \label{eq:subproblem-trust-region}
    \end{align}
\end{subequations}
where~\eqref{eq:subproblem-trust-region} represents the $\ell_\infty$ norm constraint with trust-region radius $\Delta_k$. From~\eqref{eq:subproblem-nonsmooth} to~\eqref{eq:subproblem-smooth}, we have also introduced slack variables $v,w,t$ to reformulate the nonsmooth $\ell_1$ objective as smooth constraints. Evidently, the subproblem~\eqref{eq:subproblem-smooth} is a standard convex QP.

Essentially, the trust region constraint~\eqref{eq:subproblem-trust-region} acts as a filter on how much we trust the local model (quadratic objective and linearized constraints) to approximate the merit function. The $\ell_{\infty}$ norm only limits the maximum step length in the trial step, avoiding the issues related to ill-conditioning that can occur in line search methods.

To determine whether a trial step should be accepted and whether the trust-region radius should be adjusted, we monitor the ratio of actual decrease to predicted decrease:
\begin{align}
    \label{eq:reduction_ratio}
    \rho_k \triangleq \frac{\text{ared}_k}{\text{pred}_k} 
    = \frac{
        \phi_1(x_k; \mu) - \phi_1(x_k + p_k; \mu)
    }{
        q_{\mu,k}(0) - q_{\mu,k}(p_k) 
    }.
\end{align}
This ratio $\rho_k$ serves as a key indicator of the quality of the trial step. As $\rho_k$ approaches 1, it signifies that the trial step is increasingly satisfactory, indicating the local quadratic model at this step accurately describes the local behavior of the true merit function within the trust region radius. When the actual reduction is negative or the reduction ratio is very low, it indicates that the local model is highly inaccurate. In such cases, we typically shrink the trust region radius. To avoid continuous shrinking of the trust region and the Maratos effect, we introduce a second order correction step. The essence of this correction is to replace the linear approximation of $c(x_k+p)$ in subproblem~\prettyref{eq:subproblem-smooth} with a second-order model:
\begin{equation}
c(x_k+p) \approx c(x_k) + \nabla c(x_k)^T p + \frac{1}{2}p^T \nabla_{xx}^2c(x_k) p.
\end{equation}

Suppose we have calculated an unsatisfactory trial step $p_k$, and the correction step to be determined is not far from $p_k$. 
We approximate the second-order term $p^T \nabla_{xx}^2c(x_k) p$ utilizing the value $c(x_k + p_k)$ computed at the trial step:
\begin{align}\label{eq:second_order_correction}
    p^T \nabla_{xx}^2c(x_k) p \approx \,& p_k^T \nabla_{xx}^2c(x_k) p_k \notag\\ 
    \approx \,& c(x_k+p_k) - c(x_k) - \nabla c(x_k)^\top p_k.
\end{align}
From~\prettyref{eq:second_order_correction}, we use $c(x_k+p) = \nabla c(x_k)^{\top} p + (c(x_k+p_k) -  \nabla c(x_k)^\top p_k)$ to replace the first-order approximation in~\eqref{eq:subproblem-smooth-eq} and~\eqref{eq:subproblem-smooth-ineq} to calculate a correction step $\Bar{p}_k$.
This approach does not require explicit second-order information of the constraints, thereby maintaining computational efficiency. Furthermore, it preserves the convex QP structure of the subproblem. In practice, this second-order correction has proven highly effective in correcting inaccuracies of the linearized model, leading to more efficient reductions in the merit function.

\textbf{Convergence analysis.} 
We now provide convergence guarantees of \crisp to stationary points of the merit function~\eqref{eq:merit-func}. We first recall the definition of the directional derivative.
\begin{definition}[Directional Derivative]
    The directional derivative of a function $f$: $\mathbb{R}^n \rightarrow \mathbb{R}$ in the direction $p$ is:
        \begin{equation}
        D(f;p) = \lim_{t\rightarrow0} \frac{f(x + tp)-f(x)}{t}.
    \end{equation}
\end{definition}

The stationary point of the nonsmooth merit function is:
\begin{definition}[Stationary Point]
A point $\Bar{x}$ is a stationary point of the merit function $\phi_1(x,\mu)$ in~\eqref{eq:merit-func} if its directional derivatives are nonnegative along all directions $p$, i.e.,
    \begin{equation}
        D(\phi_1(\Bar{x};\mu);p)\geq 0.
    \end{equation}
\end{definition}

We then make mild assumptions about the motion planning problems under consideration.
\begin{assumption}[Convexity]\label{assump:convergence_assump}
In problem~\prettyref{eq:problem_general}, the objective function $J(x)$ is convex and continuous differentiable; $c_i$ is differentiable and $\nabla c_i$ is Lipschitz continuous for all $i\in\calE$ and $i\in\calI$.
\end{assumption}
We are ready to present the main result.
\begin{theorem}[Convergence]
\label{thm:local_convergence}
Under Assumption~\ref{assump:convergence_assump}, suppose:
\begin{enumerate}
    \item The sequence of convex subproblems~\eqref{eq:subproblem-smooth} converges, i.e., $x_k\rightarrow x^\star$ for some point $x^\star$
    \item The trust region radius remains bounded above zero, i.e., there exists $\Delta_{\min} > 0$ such that $\Delta_k \geq \Delta_{\min}$ for all $k$.
\end{enumerate}
Then, $x^\star$ is a stationary point of the merit function~\eqref{eq:merit-func}.
\end{theorem}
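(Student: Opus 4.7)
\smallheading{Proof plan for Theorem~\ref{thm:local_convergence}}
The plan is to proceed by contradiction: I would assume that $x^\star$ is not a stationary point of $\phi_1(\cdot;\mu)$ and then construct a trial direction that forces a uniformly positive predicted reduction along the tail of the iterate sequence, contradicting the fact that $\phi_1(x_k;\mu)$ must converge.

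The first step is to relate the subproblem model~\eqref{eq:subproblem-nonsmooth} to the merit function through directional derivatives. Using the definition of $D(\cdot;p)$, the convexity of $J$, and the structure of the $\ell_1$ and $[\,\cdot\,]^-$ terms, I would show
\begin{equation}
D(\phi_1(x_k;\mu);p) \;=\; \lim_{t\downarrow 0}\frac{q_{\mu,k}(tp)-q_{\mu,k}(0)}{t},
\end{equation}
since the quadratic term contributes nothing to the directional derivative at the origin and since the constraints appear linearized exactly to first order in $q_{\mu,k}$. Using continuity of $\nabla J$ and Lipschitz continuity of each $\nabla c_i$ from Assumption~\ref{assump:convergence_assump}, this directional derivative is upper semicontinuous in $x_k$, so if $D(\phi_1(x^\star;\mu);p^*)=-\delta<0$ for some $p^*$, the same direction (rescaled so that $\|p^*\|_\infty\leq \Delta_{\min}$) satisfies $D(\phi_1(x_k;\mu);p^*)\leq -\delta/2$ for all sufficiently large $k$.

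The second step uses the trust-region acceptance rule and hypothesis~(2). By hypothesis~(2), $p^*$ is always feasible for the subproblem~\eqref{eq:subproblem-smooth}, so the optimal trial step $p_k$ satisfies $q_{\mu,k}(0)-q_{\mu,k}(p_k)\geq q_{\mu,k}(0)-q_{\mu,k}(\alpha p^*)$ for any $\alpha\in(0,1]$. Expanding and using the directional-derivative bound above, a small fixed $\alpha$ (independent of $k$, since $\nabla^2_{xx}J_k$ is bounded near $x^\star$ by continuity and convexity) yields a uniform lower bound $\text{pred}_k\geq \kappa>0$ for all large $k$. For accepted steps this gives $\phi_1(x_k;\mu)-\phi_1(x_{k+1};\mu)\geq \eta_1\,\kappa$ where $\eta_1$ is the acceptance threshold on $\rho_k$; summing over accepted indices contradicts the boundedness (in fact convergence) of $\{\phi_1(x_k;\mu)\}$ implied by hypothesis~(1) and continuity of $\phi_1$. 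It remains to rule out the possibility that only finitely many steps are accepted from some point on: because $\Delta_k\geq \Delta_{\min}>0$ and the subproblem has a uniformly bounded Hessian near $x^\star$, a standard Taylor expansion of $\text{ared}_k-\text{pred}_k$ shows that $\rho_k\to 1$ whenever $p_k\to 0$, so infinitely many acceptances must occur (or else hypothesis~(1) forces $p_k\to 0$ along rejected iterations, and then the Taylor estimate makes rejection itself impossible).

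I expect the main obstacle to be the careful handling of non-smoothness in $\phi_1$ when justifying the uniform lower bound on $\text{pred}_k$. Unlike the smooth case, one cannot invoke standard $O(\|p\|^2)$ bounds on the discrepancy between $\phi_1$ and its model, and the directional-derivative argument must be combined with a Clarke-subdifferential-style continuity statement to pass from $x^\star$ to all nearby iterates $x_k$. A secondary subtlety is the second-order correction step $\bar p_k$: I would argue that whenever $p_k$ itself already yields $\rho_k\geq \eta_1$ the correction is moot, and when the correction is invoked its only effect is to further reduce $\phi_1$, so the descent accounting in the second step of the plan still applies.
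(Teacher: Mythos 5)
Your proposal takes a genuinely different route from the paper's. The paper argues directly, with no acceptance-ratio machinery at all: since $x_k\to x^\star$ forces $p_k\to 0$ while $\Delta_k\geq\Delta_{\min}$, the trust-region constraint is eventually inactive, so $x_{k+1}$ is the \emph{global} minimizer of the convex model $q_{\mu,k}$ and hence $D(q_{\mu,k}(x_{k+1});p)\geq 0$ for every $p$. It then extracts a subsequence along which the sign pattern of the linearized constraint values $w_{k,i}=c_i(x_k)+\nabla c_i(x_k)^\top p_k$ is constant, evaluates these directional derivatives explicitly, passes to the limit, and observes that $D(\phi_1(x^\star;\mu);p)$ dominates the limit because a component that is eventually positive or negative can only degenerate to the kink, where $\pm b$ is replaced by $|b|\geq \pm b$. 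Your first step (the model is a first-order-exact local model of $\phi_1$ at $x_k$, plus upper semicontinuity of $x\mapsto D(\phi_1(x;\mu);p)$) is sound, but two concrete gaps remain in the rest.

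First, your decisive accounting assumes an acceptance rule the algorithm does not implement: in Algorithm~\ref{alg:crisp} the trial step is taken whenever $\mathrm{ared}_k\geq 0$, \emph{including} the branch $\rho_k<\eta_{\text{low}}$, so accepted steps only guarantee $\phi_1(x_k;\mu)-\phi_1(x_{k+1};\mu)\geq 0$, not $\geq\eta_1\,\mathrm{pred}_k$, and the sum over accepted indices need not diverge. The contradiction must be rerouted: continuity of $\phi_1$ and $x_k\to x^\star$ give $\mathrm{ared}_k\to 0$, so your bound $\mathrm{pred}_k\geq\kappa>0$ would force $\rho_k\to 0<\eta_{\text{low}}$, whence $\Delta_{k+1}=\gamma_{\text{shrink}}\Delta_k\to 0$, contradicting hypothesis~(2); this also makes your final step about ruling out finitely many acceptances unnecessary. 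Second, the uniform bound $\mathrm{pred}_k\geq\kappa$ cannot come from a smooth expansion of the form $q_{\mu,k}(\alpha p^*)\leq q_{\mu,k}(0)+\alpha\,D(q_{\mu,k}(0);p^*)+C\alpha^2$: the constraint terms are piecewise linear with kinks at parameter values $|c_i(x_k)|/|\nabla c_i(x_k)^\top p^*|$, which shrink to zero precisely for indices with $c_i(x^\star)=0$, and past the kink such a term can \emph{gain} up to $\alpha\mu_i|\nabla c_i(x_k)^\top p^*|$ rather than decrease at the predicted rate. You flag this as the main obstacle, but the fix is essentially the paper's sign-pattern bookkeeping: split indices by whether $c_i(x^\star)$ vanishes, use the exact first-order identity for the non-vanishing ones (valid for a fixed small $\alpha$ once $k$ is large), and the worst-case bound $\alpha\mu_i|\nabla c_i(x_k)^\top p^*|$ for the vanishing ones; the resulting coefficient converges to $-D(\phi_1(x^\star;\mu);p^*)=\delta>0$ exactly because the directional derivative at $x^\star$ already charges $\mu_i|\nabla c_i(x^\star)^\top p^*|$ for the vanishing indices. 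With these two repairs your argument closes, and it has the modest advantage of avoiding the paper's extraction of a subsequence over the $3^{|\mathcal{E}|}$ sign patterns.
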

\begin{proof}
    See~\prettyref{app:proof_main_thm}.
\end{proof}

While the nonsmoothness of the merit function makes it challenging to verify whether its stationary points are local minima, a correspondence between the local minima of the merit function and the original problem can be established.

\begin{proposition}[Local Optimality]
\label{prop:local_minima}
Let $x^\star$ be a local minimizer of the merit function $\phi_1(x;\mu)$ in \eqref{eq:merit-func}. If $x^\star$ is feasible for the original problem \eqref{eq:problem_general}, that is,
\begin{align}
    \quad c_i(x^\star) = 0, \quad i \in \mathcal{E}\\
    \quad c_i(x^\star) \geq 0. \quad i \in \mathcal{I}
\end{align}
Then $x^\star$ is also a local minimizer of the original problem \eqref{eq:problem_general}.
\end{proposition}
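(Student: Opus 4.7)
The plan is a short, direct argument that exploits the exactness of the penalty terms at feasible points: because $|c_i(x)|$ and $[c_i(x)]^-$ vanish precisely when the corresponding constraints are satisfied, the merit function agrees with the objective on the feasible set, and any local dominance on the larger set transfers to the feasible set.

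First I would unpack what the hypotheses give us. Since $x^\star$ is a local minimizer of $\phi_1(\cdot;\mu)$, there exists a neighborhood $N$ of $x^\star$ such that
\begin{equation*}
\phi_1(x^\star;\mu) \le \phi_1(x;\mu) \qquad \text{for all } x \in N.
\end{equation*}
Next, because $x^\star$ is feasible for~\eqref{eq:problem_general}, we have $c_i(x^\star)=0$ for $i \in \mathcal{E}$ and $c_i(x^\star)\ge 0$ for $i \in \mathcal{I}$, so both penalty contributions vanish:
\begin{equation*}
\sum_{i\in\mathcal{E}} \mu_i|c_i(x^\star)| = 0, \qquad \sum_{i\in\mathcal{I}} \mu_i[c_i(x^\star)]^- = 0.
\end{equation*}
Therefore $\phi_1(x^\star;\mu)=J(x^\star)$.

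The key step is then to restrict attention to the intersection of $N$ with the feasible set of~\eqref{eq:problem_general}. For any such $x$, the same reasoning as above shows that all penalty terms in~\eqref{eq:merit-func} vanish, so $\phi_1(x;\mu)=J(x)$. Chaining these identities with the local minimizer inequality gives
\begin{equation*}
J(x^\star) = \phi_1(x^\star;\mu) \le \phi_1(x;\mu) = J(x)
\end{equation*}
for every feasible $x \in N$, which is exactly the definition of $x^\star$ being a local minimizer of~\eqref{eq:problem_general}. No step here is truly an obstacle; the only subtlety worth a sentence is noting that nothing in this argument requires any constraint qualification or any particular magnitude of the penalty weights $\mu_i$, since we never need the reverse inclusion that a minimizer of the original problem minimizes the merit function. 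The proposition gives only a one-sided correspondence, which is precisely what is available without CQs in the MPCC setting.
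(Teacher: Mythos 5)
Your argument is correct and is essentially identical to the paper's own proof: both take the neighborhood $N$ (the paper's $B_r(x^\star)$) furnished by local minimality of $\phi_1$, observe that the penalty terms vanish on the feasible set so that $\phi_1 = J$ there, and restrict the inequality to $N \cap \Omega$ to conclude. Your closing remark that no constraint qualification or penalty-weight condition is needed for this one-sided implication is a nice observation consistent with the paper's surrounding discussion, but the proof itself matches the paper's.
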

\vspace{-2mm}
\begin{proof}
    Since $x^\star$ is a local minimizer of $\phi_1(x;\mu)$, by definition, there exist $r>0~~s.t.~~\phi_1(x^\star;\mu) \leq \phi_1(x;\mu),~~\forall||x-x^\star||<r$. Define the set $||x-x^\star||<r$ as $B_r(x^\star)$ and let the feasible set of the original problem \eqref{eq:problem_general} be $\Omega$. Then, we have
    $$\phi_1(x^\star;\mu) \leq \phi_1(x;\mu).~~\forall x \in B_r(x^\star) \cap \Omega$$
    On the other hand, we have $\phi_1(x;\mu) = J(x)$ $\forall x\in\Omega$. Thus, evaluating the objective function of the original problem, we have:
    $$J(x^\star) \leq J(x),~~\forall x \in B_r(x^\star) \cap \Omega, ~~x^\star \in \Omega$$
    which is the definition of a local minimizer. 
\end{proof}

Combining Theorem \ref{thm:local_convergence} and Proposition \ref{prop:local_minima}, we conclude that if (a) \crisp converges to a stationary point $x^\star$ of the merit function, (b) $x^\star$ is a local minimizer of the merit function, and (c) $x^\star$ is feasible for the original problem \eqref{eq:problem_general}, then $x^\star$ is also a local minimizer of \eqref{eq:problem_general}.
In other words, when \crisp computes a stationary point $x^\star$ of the merit function (which often occurs and can be certified) that is also locally optimal, the only way that $x^\star$ might fail to be a local minimizer of the original problem \eqref{eq:problem_general} is that $x^\star$ is infeasible for \eqref{eq:problem_general}. We note that while this statement is theoretically useful, due to the nonsmoothness of the merit function, certifying local optimality of $x^\star$ for the merit function is challenging. We leave this for future work.

\textbf{Final algorithm.}
Having introduced the key algorithmic components and the convergence analysis, we formally present \crisp in Algorithm~\ref{alg:crisp}. 
The algorithm initiates with a common penalty variable $\mu_0$ for all constraints. Guided by the checkable convergence conditions outlined in~\prettyref{thm:local_convergence}, we solve a series of trust-region subproblems (convex QPs) in the inner iterations. This process continues until the conditions in Theorem~\ref{thm:local_convergence} are satisfied, indicating convergence to a stationary point of the current merit function. 
In the outer iterations, we examine the constraint violation against a threshold $\epsilon_c$. For constraint violations exceeding this threshold, we increase their corresponding penalty parameters.
The default values of all hyperparameters in Algorithm~\ref{alg:crisp} are summarized in~\prettyref{tab:hyperparameters}.

\section{\crisp: Implementation}
\label{sec:implementation}

\textbf{User interface.}
\crisp follows the general nonlinear programming problem formulation in~\eqref{eq:problem_general} where the user defines the objective function and constraints. Our implementation builds upon the fundamental data types in Eigen3 (vectors and matrices), wrapped with CPPAD~\cite{cppad} and CPPAD Code Generation (CG)~\cite{cppadcg}. This architecture allows users to define problems in an Eigen-style syntax, specifying objectives and constraints (both equality and inequality) while leveraging CPPAD for automatic derivative computation.
The solver utilizes CPPAD to automatically obtain derivative information, while CG is employed to generate and store an auto-diff library. This library can be dynamically loaded for efficient evaluation of gradients and Hessians at specified points, significantly reducing computational overhead in subsequent solves. Besides, our implementation supports both parameterized and non-parameterized function definitions, allowing real-time modification of problem parameters (e.g., reference, terminal, or initial states) without the need to regenerate the library. It is noteworthy to point that

\textbf{QP solver.}
The key subroutine in \crisp is to solve the convex trust-region QPs. After careful comparison and experimentation with several QP solvers in \crisp, including \scenario{OSQP}~\cite{osqp}, \scenario{PROXQP}~\cite{proxqp}, \scenario{qpOASE}~\cite{qpOASES}, and \scenario{PIQP}~\cite{schwan2023piqp}, we integrate \scenario{PIQP}, an interior-point method-based QP solver with embedded sparse matrix operations, which strikes an excellent balance of accuracy and real-time performance.
To optimize the construction of the QP subproblems~\eqref{eq:subproblem-smooth}, we perform memory-level operations for extracting, copying, and concatenating large-scale sparse matrices, bypassing the Eigen3 higher-level interfaces. This approach enhances the efficiency of sparse matrix operations that are crucial for complex robot motion planning problems.

To enhance usability, we also implement a Python interface using Pybind11. This interface allows users to adjust problem parameters and solver hyperparameters, solve and re-solve problems, and extract results within a Python environment, providing flexibility for downstream applications.

\begin{algorithm}[htp]
    \caption{\crisp}
    \label{alg:crisp}
    \SetAlgoLined
    \KwIn{Initial guess $x_0$}
    \KwParameter{$k_{\max}$, $\Delta_0$, $\Delta_{\max}$, $\mu_0$, $\mu_{\max}$, $\eta_{\text{low}}$, $\eta_{\text{high}}$, $\gamma_{\text{shrink}}$, $\gamma_{\text{expand}}$, $\epsilon_c$, $\epsilon_p$, $\epsilon_r$}
    Initialize $k \leftarrow 0$, $\mu \leftarrow \mu_0$, $\Delta_k \leftarrow \Delta_0$\;
    \While{$k < k_{\max}$}{
        Evaluate $f_k$, $\nabla f_k$, $\nabla^2_{xx} f_k$, $c_k$, and $\nabla c_k$ at $x_k$\;
        Construct and solve subproblem~\eqref{eq:subproblem-smooth} to obtain $p_k$\;
        Compute reduction ratio $\rho_k$\;
        // \texttt{\small{Second Order Correction}}\;
        \If{$\textup{ared}_k < 0$}{
            From~\prettyref{eq:second_order_correction}, calculate Second Order Correction Step $\Bar{p}_k$\;
            $p_k \leftarrow \Bar{p}_k$\;
            Re-compute reduction ratio $\rho_k$\;
            \If{$\textup{ared}_k < 0$}
            {Abandon the Step\;
            $\Delta_{k+1} \leftarrow \gamma_{\text{shrink}} \Delta_k$\;
            Go to Line 28\;
            }
        }
            // \texttt{\small{Trust Region Update}}\;
            \uIf{$\rho_k < \eta_{\text{low}}$}{
                $\Delta_{k+1} \leftarrow \gamma_{\text{shrink}} \Delta_k$\;
                $x_{k+1} \leftarrow x_k + p_k$\;
            }
            \uElseIf{$\rho_k > \eta_{\text{high}}$ and $\|p_k\|_{\infty} = \Delta_k$}{
                $\Delta_{k+1} \leftarrow \min\{\gamma_{\text{expand}}\Delta_k, \Delta_{\max}\}$\;
                $x_{k+1} \leftarrow x_k + p_k$\;
            }
            \Else{
                $\Delta_{k+1} \leftarrow \Delta_k$\;
                $x_{k+1} \leftarrow x_k + p_k$\;
            }
        
        // \texttt{\small{Check Convergence of Merit Function}}\;
        \If{$\Delta_{k+1} < \epsilon_r$ \textbf{or} $\|p_k\|_{\infty} < \epsilon_p$}{
        \texttt{\small{Merit Function Converged}}\;
        \texttt{\small{Check Overall Convergence}}\;
                \eIf{max constraint violation $ < \epsilon_c$}{
                    \Return{$x^* \leftarrow x_k$}\;
                    \texttt{\small{Optimization Successful}}\;
                }{
                    \For{$i\in\calE$ with $|c_i(x_k)| \geq \epsilon_c$\\ \textbf{and} $i\in\calI$ with $[c_i(x_k)]^- \geq \epsilon_c$}{
                        $\mu_i \leftarrow \min(10\mu_i, \mu_{\max})$\;
                    }
                    \If{any $\mu_i > \mu_{\max}$}{
                        \Return{Failure}\; 
                        \texttt{\small{Penalty Max Out}}\;
                    }
                }
            
        }
        $k \leftarrow k + 1$\;
    }
    \texttt{\small{Max Iterations Reached\;}}
    \Return{Failure;}
    \end{algorithm}
\begin{table}[ht]
    \centering
    \caption{Definition of Hyperparameters in \scenario{CRISP}}
    \begin{tabular}{ cc }
    \toprule[1pt]
    \textbf{Paramaters} & \textbf{Descriptions}\\
    \midrule
    $k_{max}$ & max iteration numbers: 1000 \\
    $\Delta_0$ & initial trust region radius: 1\\
    $\Delta_{max}$ & max trust region radius: 10\\
    $\mu_0$ & initial penalty: 10\\
    $\mu_{max}$ & max penalty: $1e^6$\\
    $\eta_{low}$ & reduction ratio lower bound: 0.25\\
    $\eta_{high}$ & reduction ratio upper bound: 0.75\\
    $\gamma_{shrink}$ & trust region shrink factor: 0.25\\
    $\gamma_{expand}$ & trust region expand factor: 2\\
    $\epsilon_c$ & tolerance of constraint violation: $1e^6$\\
    $\epsilon_p$ & tolerance of trial step norm: $1e^{-3}$\\
    $\epsilon_r$ & tolerance of trust region radius: $1e^{-3}$\\ 
    \bottomrule[1pt]
    \end{tabular}
    \label{tab:hyperparameters}
\end{table}
\begin{remark}[First-order QP Solver]\label{remark:fom_qp}
    There are growing interests in robotics to leverage first-order QP solvers (e.g., building upon ADMM~\cite{stellato20mpc-osqp} and PDHG~\cite{lu2023practical}) for large-scale motion planning. While first-order QP solvers are typically more scalable due to cheap per-iteration cost, recent work~\cite{khazoom2024tailoring} has shown that insufficient solution quality caused by first-order QP solvers may hurt motion planning. For this reason, \crisp chose the interior-point solver PIQP to make sure the inner QPs are solved to sufficient accuracy. However, we do note that \crisp is built in a modular way such that users can switch between different QP solvers. 
\end{remark}


\section{\crisp: Benchmark}
\label{sec:exp}
In this section, we present a comprehensive benchmark study to evaluate the performance of our proposed solver, \scenario{CRISP}, against several state-of-the-art optimization algorithms. As illustrated in~\prettyref{fig:all_tasks}, we have selected six representative contact-rich planning problems, each involving intricate interactions of normal (support) forces and friction forces. These problems showcase the diverse challenges encountered in practical applications where contact dynamics play a crucial role. All optimization problems are derived using the contact-implicit formulation and the detailed derivations are presented in~\prettyref{app:contact-implicit-formulation-for-experimental-tasks}. The optimization horizon for push T is 50 with time discretization $dt=0.05$ seconds, and all other examples are optimized over 200 steps with $dt=0.02$ seconds. All experiments were conducted on a laptop equipped with a 13th Gen Intel(R) Core(TM) i9-13950HX processor. 

\begin{figure*}[t] 
    \centering
    \includegraphics[width=1.0\linewidth, trim={0cm 0cm 0cm 0cm}, clip]{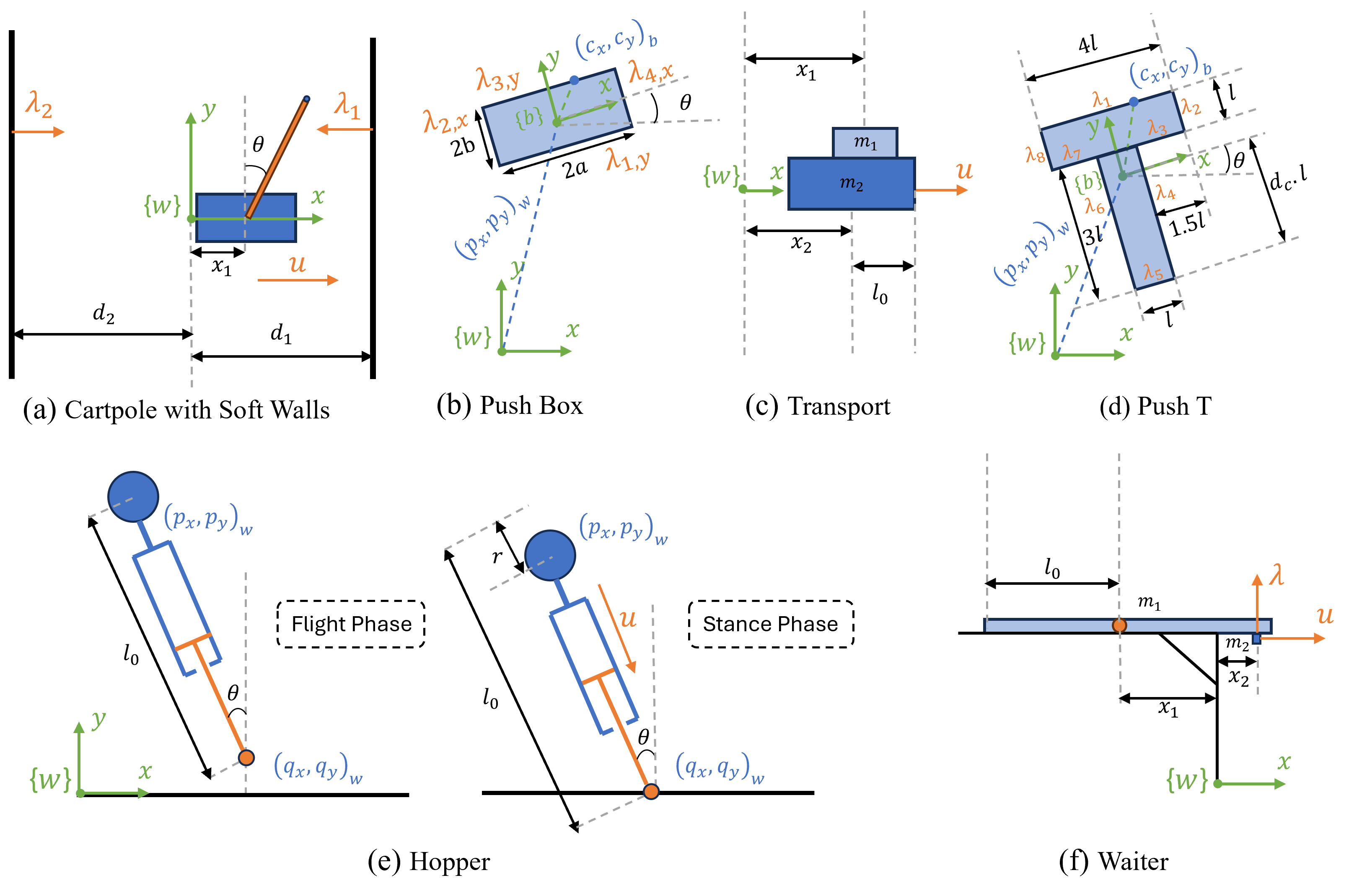}
    \caption{Schematic overview of the contact-implicit motion planning tasks considered in the experiments. Each task poses unique challenges in contact sequencing, force distribution, and modeling of the multi-modal dynamics.}
    \label{fig:all_tasks}
    \vspace{-3mm}
\end{figure*}

\subsection{Benchmark Solvers}
In the first part (\S\ref{sec:comparsion_with_numerical_solvers}), we compare \scenario{CRISP} against three state-of-the-art algorithms.
The first is \snopt \cite{SNOPT}, a sparse nonlinear optimizer based on the SQP method. The second is \ipopt \cite{ipopt}, an interior point optimizer designed for large-scale nonlinear programming. Finally, we include \textsc{ProxNLP} \cite{proxNLP}, a primal-dual augmented Lagrangian algorithm designed by roboticists. 
in subsection \S\ref{sec:additional_exp}, we present additional experiments to further demonstrate \crisp's robustness and superior performance across a broader range of contact-rich motion planning scenarios. Specifically, (\emph{i}) For the push T task, we reformulate the problem as a mixed-integer quadratically constrained problem (MIQCP), which we solve using Gurobi~\cite{gurobi}. (\emph{ii}) We evaluate the LCP solver LCQpow \cite{Hall_2024} on two examples with linear dynamics and linear complementarity constraints. (\emph{iii}) We validate \crisp in real-world Push T with a Franka Panda robot arm. 

\subsection{Benchmark Problems}
For each problem depicted in Fig. \ref{fig:all_tasks}, we provide a brief description of the task along with the initial conditions and initial guesses supplied to the solvers. The detailed formulations for each example, including objective functions, dynamics, contact constraints, and other general constraints, are provided in~\prettyref{app:contact-implicit-formulation-for-experimental-tasks}. Furthermore, we have open-sourced all these problems as examples in \crisp.
\begin{enumerate}
\item Cartpole with soft walls: As shown in Fig. \ref{fig:all_tasks}(a), this problem involves trajectory optimization of a cartpole system constrained between two walls, with the objective of swinging up the pole to an upright stance. The walls are considered ``soft'', providing unidirectional forces analogous to springs. Depending on the initial conditions, the optimal trajectory may require the pole to make or break contact with the walls, thereby leveraging contact forces to accomplish the task. $x_1$ and $\theta$ represent the cartpole states; $u$ is the horizontal active force applied on the cart, while $\lambda_1$ and $\lambda_2$ are the passive contact forces generated when the pole's end makes contact with a wall. These contact forces are proportional to the wall displacement, effectively simulating the soft contact forces. 
Detailed contact-implicit formulation can be found~\prettyref{app:formulation-cartpole}.
We consider two distinct scenarios in the comparison based on the cart's initial position: centered between the walls and in proximity to one wall. For each scenario, we select five different initial conditions, with varying initial pole angles and velocities. For each initial condition, we provide two types of initial guesses: passive trajectories rolled out under zero control input, and these trajectories contaminated with random noise.

\item Push Box: The next benchmark problem is the classic push box problem, as illustrated in Fig.~\ref{fig:all_tasks}(b). In this task, our objective is to manipulate a planar box with dimensions $2a \times 2b$ to a series of specified target configurations on a table with friction. It involves multiple contact modes---four in this case, corresponding to different faces of the box. The solver must reason about both the sequence of contact positions and the appropriate contact forces to achieve the desired motion.
The initial state of the system places the box at the origin, aligned with the positive $x$-axis. We provide an all-zero initial guess to the solvers, and the objective is to push the box to 18 different configurations. Each configuration is located 3 meters from the origin, with the angles distributed evenly in a clockwise direction, completing a full 360-degree circle. 
We refer to~\prettyref{app:formulation-pushbox} for the contact-implicit formulation of different contact modes.

\item Transport: As shown in Fig.~\ref{fig:all_tasks}(c), the goal is to determine the active force \(u\) applied to the cart, under various initial poses and velocities, to transport the payload \(m_1\) to a specified position without it falling off the cart \(m_2\). This requires precise indirect manipulation of \(m_1\) by controlling the friction force between \(m_1\) and \(m_2\) through \(u\). We use an all-zero initial guess for different initial states. The cart is commanded to move from \SI{3} meters to the origin, with different start and goal relative positions (leftmost, middle, rightmost) to the payload (e.g., precisely moving the payload from the leftmost to the rightmost part of the cart while reaching the absolute end position). The task becomes more challenging with different initial and final velocities, requiring alternating dragging motions of the cart to achieve the objective. Its MPCC formulation is provided in Appendix~\ref{app:formulation-transport}.

\item Push T: The modeling techniques for the push T problem are similar to those used in the push box problem. However, it presents its own challenges due to the non-convex shapes of the ``T'' and the increase in contact modes ($\lambda_1-\lambda_8$ as shown in Fig.~\ref{fig:all_tasks}(d)). This means we cannot simply construct complementarity constraints related to contact using four hyperplanes as we do with the push box problem. We now need to establish complementarity conditions that involve contact points lying on certain line segments and exerting corresponding contact forces, which requires special modeling techniques. Details of the problem formulation is demonstrated in Appendix~\ref{app:formulation-pushT}. Together with the slack variables, the total state dimension is 29. The experiment setting is the same with Push Box.
\item Hopper: We model the system as a point mass $m$ atop a massless leg with original length $l_0$, capable of radial contraction and thrust application. As shown in Fig.~\ref{fig:all_tasks}(e), the hopper exhibits two distinct phases: the ``Flight Phase'', where the leg length remains constant and motion approximates free fall, and the ``Stance Phase'', which initiates upon ground contact. During flight, we directly control the leg angular speed ($\dot{\theta} = u_1$). The stance phase, triggered by $p_y - l_0\cos\theta \leq 0$, involves fixed-point rotation around the contact point. As the leg contracts ($r$ increases within $r_0$), it can exert a unidirectional thrust $u = u_2$ along its radial axis. The transition back to flight occurs when $r = 0$.
The task objective is to navigate the hopper's hybrid dynamics without pre-specified contact sequences. The hopper is released from a certain height at the origin, and must optimize its own contact sequence, stance phase contact forces, and flight phase angular velocities to jump and stop at $x = 2\,\textup{m}$. This involves simultaneously managing multiple goals: exploring optimal contact sequences, controlling leg angle during flight, and managing thrust after landing to achieve the specified final position. The initial guess is a passive free-fall motion.
As derived in~\prettyref{app:formulation-hopper}, we employ a contact-implicit formulation to unify the dynamics of both phases within a single framework, with carefully crafted complementarity constraints to govern the transitions between phases.

\item Waiter: The waiter problem, see Fig.~\ref{fig:all_tasks}(f), is also studied in~\cite{RSS-yang-2024} where it was modeled with linear complementarity constraints and optimized in a model-predictive control pipeline. Initially, the plate on the table has only a small overhang. The objective is to indirectly control the friction force with the plate by applying a normal force $\lambda$ and a pulling force $u$ to the plate $m_1$ via the pusher $m_2$. The goal is to gradually extract the plate until its center of mass $x_1$ aligns with the pusher's center $x_2$, stopping at the table's edge with matching velocities. This task requires consideration of the friction between the table and the plate, as well as preventing the plate from tipping over its leftmost contact point with the table (see~\prettyref{app:formulation-waiter}). We provide an all-zero initial guess to test the solvers' ability to optimize an informed contact trajectory.




\end{enumerate}

\subsection{Comparisons with Numerical Solvers }
\label{sec:comparsion_with_numerical_solvers}
While achieving feasible trajectories is a primary goal for local solvers, this alone is insufficient in robotics applications. Dynamically feasible trajectories can often be trivially obtained (e.g., remaining stationary at the origin) but may lack practical value. Therefore, we argue that beyond mere feasibility, the ability to efficiently and robustly optimize informed control sequences for tracking objectives is crucial.

\textbf{Performance metrics.} We evaluate the solvers' performance based on three key metrics. First, we examine the solution success rate, which reflects the overall ability of each solver to converge to a \textit{feasible} and \textit{informed} solution from various settings (different initial/terminal conditions, different initial guesses). Second, we assess the quality of the solution through the objective values achieved and the tracking error. Finally, we measure the solving efficiency by considering the number of iterations required and the total solving time. These metrics serve to answer a fundamental scientific question: 

\emph{Can these solvers robustly discover informed contact sequences and contact force profiles from scratch?} 

This inquiry is particularly relevant, as precise contact patterns are often unknown \emph{a priori} in real-world applications.

\begin{table*}[htbp]
\centering
\resizebox{\textwidth}{!}{%
\renewcommand{\arraystretch}{1.5}
\large
\begin{tabular}{@{}c|ccccc|ccccc|ccccc|ccccc@{}}
\toprule
\multirow{2}{*}{\Large\textbf{Methods}} & \multicolumn{5}{c|}{\Large\textbf{Cartpole with Soft Walls}} & \multicolumn{5}{c|}{\Large\textbf{Push Box}} & \multicolumn{5}{c}{\Large\textbf{Transport}} & \multicolumn{5}{c}{\Large\textbf{Push T}} \\
\cmidrule(lr){2-6} \cmidrule(lr){7-11} \cmidrule(lr){12-16}  \cmidrule(lr){17-21}
 & \stackon{ Rate [\%]}{Success} & \stackon{Error}{Tracking} & \stackon{}{Violation}  & \stackon{}{Iterations} & \stackon{[s]}{Time} & \stackon{ Rate [\%]}{Success} & \stackon{Error}{Tracking} & \stackon{}{Violation} & \stackon{}{Iterations} & \stackon{[s]}{Time} & \stackon{ Rate [\%]}{Success} & \stackon{Error}{Tracking} & \stackon{}{Violation} & \stackon{}{Iterations} & \stackon{[s]}{Time}  & \stackon{ Rate [\%]}{Success} & \stackon{Error}{Tracking} & \stackon{}{Violation} & \stackon{}{Iterations} & \stackon{[s]}{Time}\\
\midrule

\snopt & $63.33$ & $0.08$ & $2.0\times 10^{-3}$ & $4552.50$ & $12.97$ & $0$& $4.25$ & $1.0\times 10^{-11}$ & $3303.00$ & $25.22$ & $\bf{100}$& $1.7 \times 10^{-4}$ & $5.5\times 10^{-9}$ & $5782.30$ & $43.98$ & $33.33$& $0.67$ & $\bf 6.4\times 10^{-8}$ & $2257.50$ & $210.92$ \\
\midrule

\ipopt & $86.67$ & $0.04$ &  $8.9\times 10^{-7}$ & $\bf 288.21$ & $\bf 0.51$ & $94.44$& $0.03$ & $\bf 1.3\times 10^{-11}$ & $\bf 407.69$ & $0.92$ & $87.50$& $0.02$ & $1.6\times 10^{-8}$ & $404.96$ & $1.36$ & $72.22$& $0.26$ & $4.9\times 10^{-7}$ & $1560.20$ & $39.49$\\
\midrule

\scenario{PROXNLP} & $60$ & $0.032$ & $1.4\times 10^{-3}$ & $716.25$ & $140.92$ & $66.67$ & $0.03$ & $3.6\times 10^{-5}$ & $2000.00$ & $1277.21$ & $59.26$ & $0.01$ & $1.1\times 10^{-3}$ & $361.92$ & $1070.01$ & $-$ & $-$ & $-$ & $-$ & $-$\\
\midrule

\crisp (ours) & $\bf{100}$ &  $\bf 0.02$ & $\bf 1.4\times 10^{-7}$ & $415.39$ & $1.56$ & $\bf{100}$& $\bf 0.02$ & $8.3\times 10^{-10}$ & $420.69$ & $\bf{0.74}$ & $\bf{100}$& $\bf 1.1 \times 10^{-4}$ & $\bf 1.1\times 10^{-11}$ & $\bf{286.20}$ & $\bf 0.77$ & $\bf{100}$& $\bf 0.01$ & $8.7\times 10^{-7}$ & $\bf 311.35$ & $\bf 3.54$ \\

\bottomrule
\end{tabular}
}

\caption{Comparison of \crisp with benchmark solvers across different tasks.}
\label{tab:comparison}
\end{table*}

\begin{figure*}[thp]
    \centering
    \includegraphics[width=1.0\linewidth, trim={0cm 0.0cm 0cm 0cm}, clip]{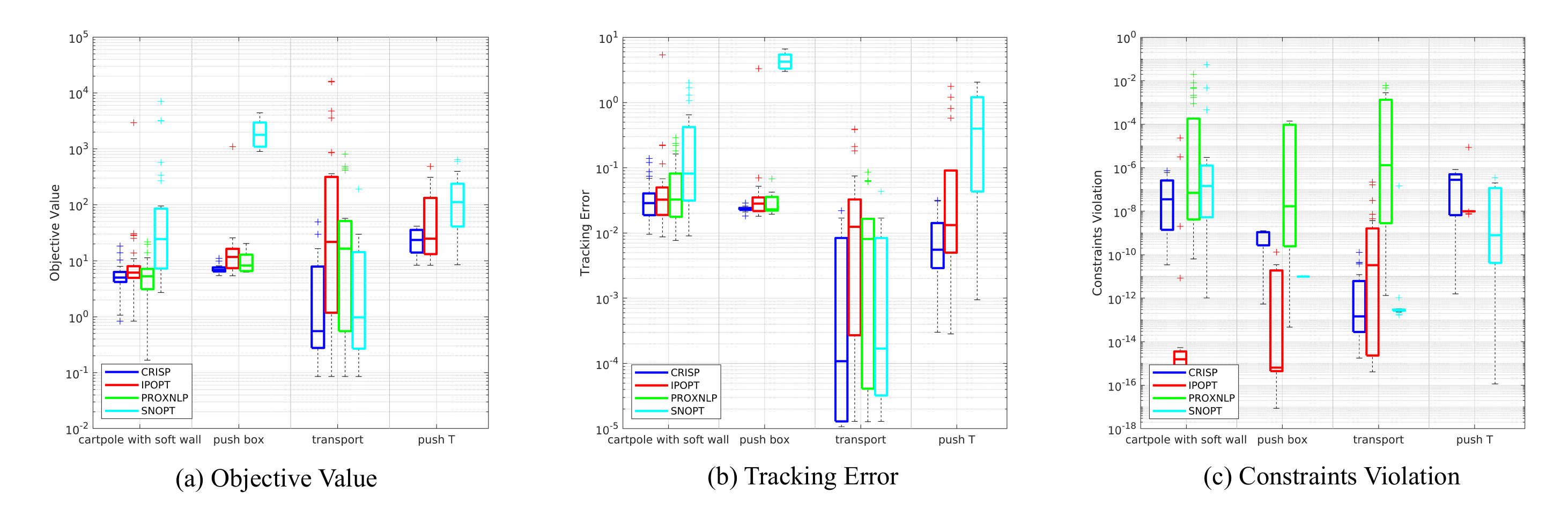}
    \caption{Box plots of benchmark metrics on three tasks: cartpole with soft walls, push box, transport, and push T. The plots complement~\prettyref{tab:comparison} to show the distribution of (a) objective value, (b) tracking error, and (c) constraint violation across multiple initial states and initial guesses. \scenario{PROXNLP} was unable to solve the Push T problem within a tractable amount of time due to its larger problem size, so its data is not applicable in the figure. }
    \label{fig:box_trackingerror}
\end{figure*}

\begin{figure}[thp]
    \centering
    \includegraphics[width=1.0\linewidth, trim={0cm 0cm 0.2cm 0.5cm}, clip]{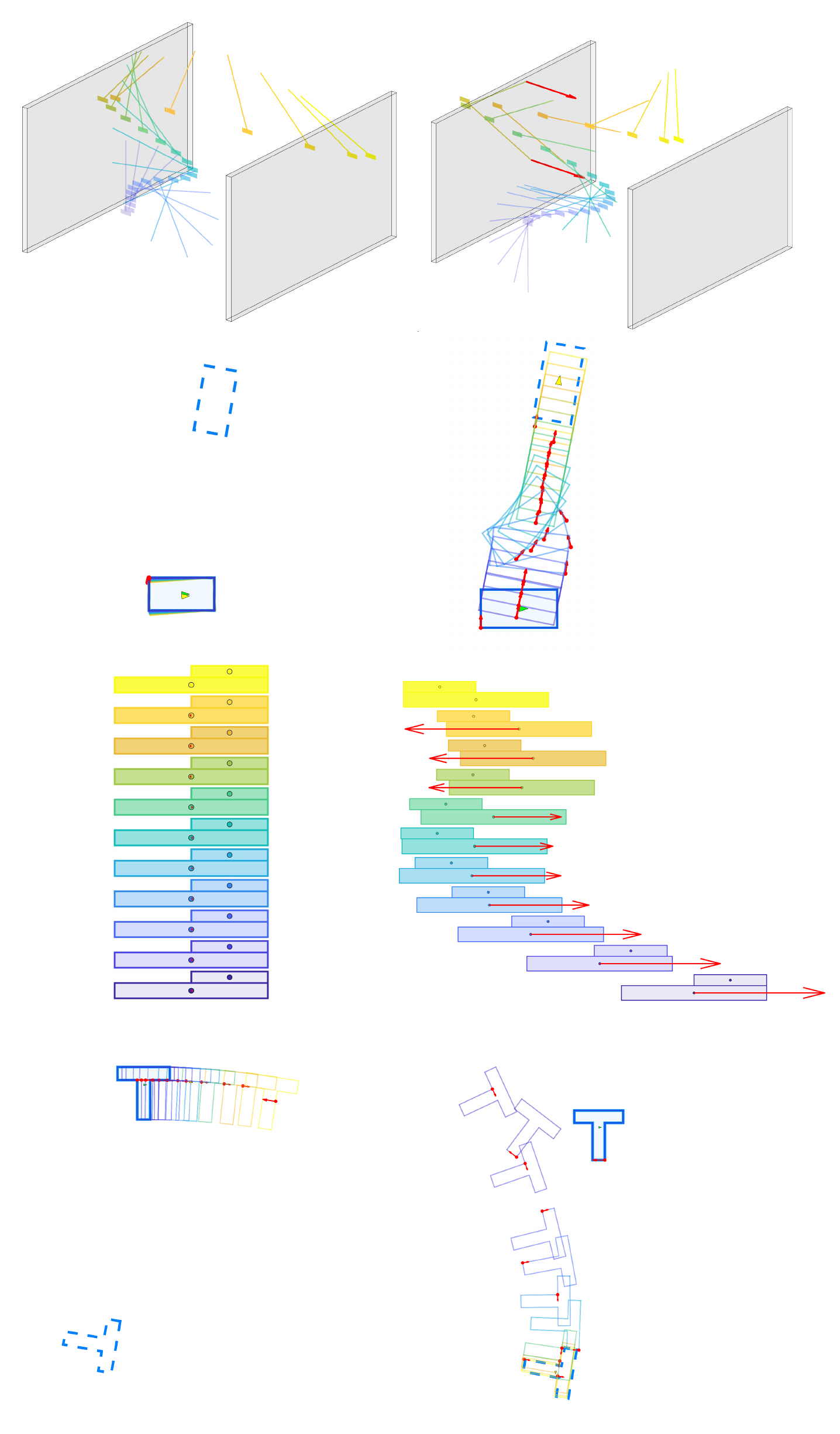}
    \caption{Visualization of some outlier cases in \ipopt. \ipopt results (left column) versus our approach (right column). The color gradient represents the progression of time (from blue to yellow).
    We note that in the transport example (the third row), the initial state has the cargo at the rightmost end of the cart, with both moving left at 4 $\textup{m}/\textup{s}$. The terminal state requires the cart at the origin and the cargo at the leftmost end, both with a leftward velocity of 2 $\textup{m}/\textup{s}$. This scenario necessitates initial braking, followed by back-and-forth motion near the origin. Actually, this challenging case causes all other solvers to fail. While \ipopt produces a feasible but impractical solution (remaining stationary at the origin), our approach successfully optimizes the complex contact sequences.}
    \label{fig:exp_outlier_01}
\end{figure}

\begin{figure}[thp]
    \centering
    \includegraphics[width=1.0\linewidth, trim={1cm 0cm 0cm 0cm}, clip]{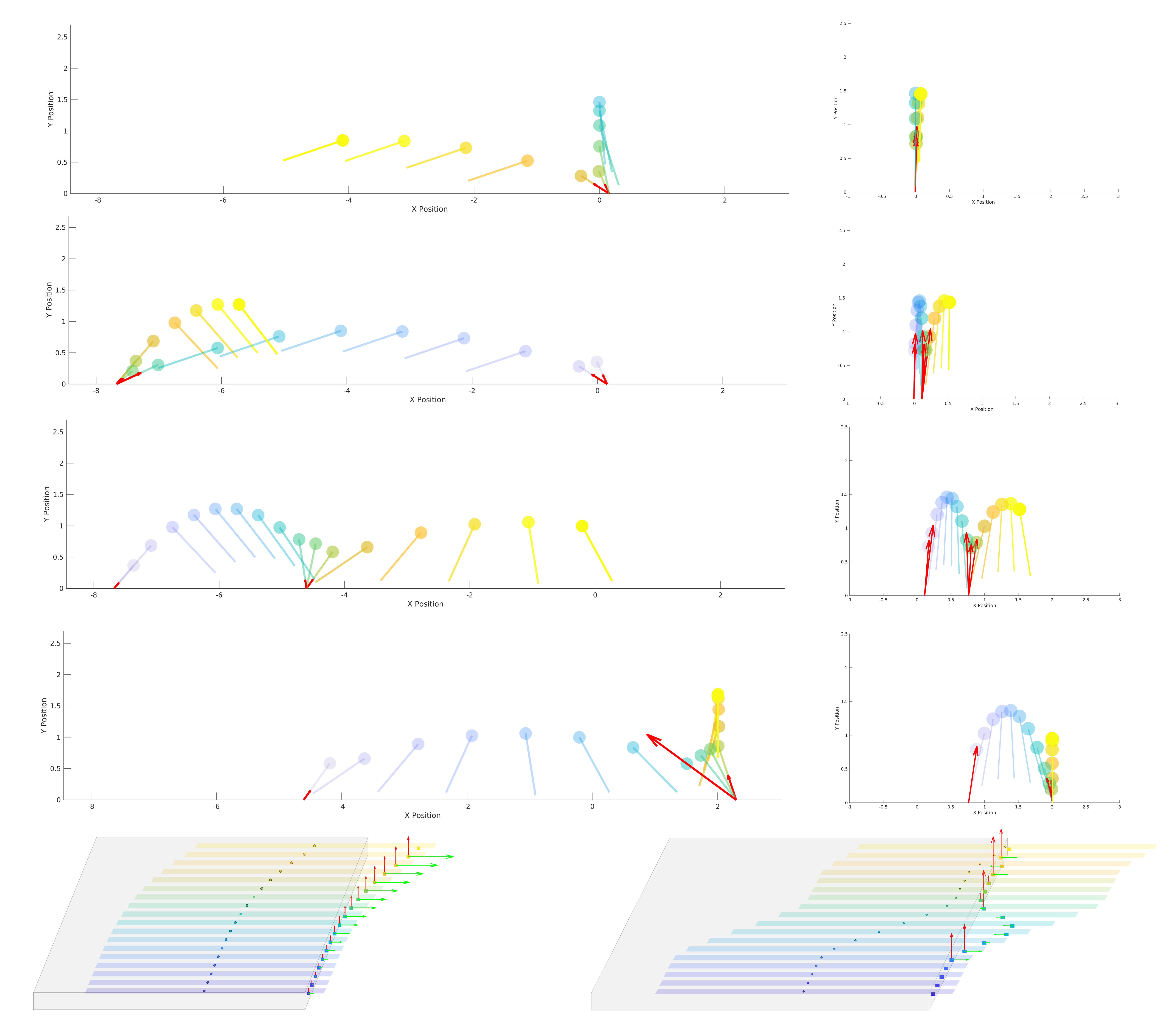}
    \caption{Comparison of optimized trajectories for the hopper and waiter problems. The left and right columns show the results from \ipopt and \crisp respectively. In the hopper problem, we present four key frames (from top to bottom at $1\,\textup{s}$, $2\,\textup{s}$, $3\,\textup{s}$, and $4\,\textup{s}$) illustrating the hopper's trajectories and contact force. The yellow color indicates the current position of the hopper, while blue represents the past trail.}
    \label{fig:exp_hopper_waiter}
\end{figure}

\textbf{Quantitative results.}
For the first four examples, we recorded and summarized the numerical performance of the benchmark solvers under different initial states and initial guesses, as presented in the~\prettyref{tab:comparison} and Fig. \ref{fig:box_trackingerror}. Solutions with constraints violation below $10^{-5}$, translation error norm below 0.1, translation velocity error norm below 0.5, angular error below $\pi/6$, and angular velocity error below $0.1\pi\,\textup{rad}/\textup{s}$ as deemed successful.
The objective function, as previously highlighted, primarily balances tracking error and control efforts. We analyze the performance of four solvers across the first four examples, focusing on median tracking error, average iteration count, and computation time. To better illustrate the performance and robustness of each solver under different initial guesses, we boxplot the distribution of constraint violations, objective values, and tracking errors in Fig. \ref{fig:box_trackingerror}. These plots provide a comprehensive overview of solver behavior across all experiments, including both feasible and infeasible solutions, offering a complete picture of their performance characteristics.

\emph{Our results demonstrate that \crisp consistently produces feasible trajectories with the lowest tracking error and overall objective values across all tasks.} 

The box plot underscores \crisp's remarkable robustness, showing consistently superior performance across varied initial states and guesses, with minimal outliers. While \ipopt generally performs well among the benchmark solvers, it occasionally produces feasible but impractical solutions. We've visualized these outlier cases, contrasting \crisp and \ipopt trajectories in Fig.~\ref{fig:exp_outlier_01}. \snopt, despite outperforming \ipopt in the transport example, struggles with efficiency due to high iteration requirements and exhibits unstable optimization results, notably failing to optimize the informed contact sequence in the push box scenario. These findings underscore the critical importance of solver stability. More visualization of the trajectories can be found in~\prettyref{app:additional-numerical-result-visualize}.

\textbf{Qualitative results. }
For the hopper and waiter examples, which involve more challenging contact reasoning, we focus our comparison on \crisp and \ipopt. The results are visualized in Fig. \ref{fig:exp_hopper_waiter}.
In the hopper task, the objective is to jump and stop at a $2\,\textup{m}$ horizontal distance with zero height, while minimizing control effort. The waiter task requires maneuvering the plate's center of mass to the edge of the table without tipping, ensuring that both the plate and pusher have a final rightward velocity of $2\,\textup{m}/\textup{s}$. Both \ipopt and \crisp successfully optimized feasible trajectories, but \crisp significantly outperformed \ipopt in terms of solution quality for finding informed contact sequences. \crisp achieved remarkably low objective values of 5.45 and 8.68 for the hopper and waiter examples, respectively, with precise tracking performance. In contrast, \ipopt's solutions resulted in much higher objective values of 137.5 and 1855, failing to reach the desired terminal states and incurring substantially larger control efforts, which is evidently shown in Fig. \ref{fig:exp_hopper_waiter}.

\textbf{Videos}. On the project website, we provide animations of optimized trajectories computed by \crisp.

\subsection{Additional Experiments}
\label{sec:additional_exp}
\textbf{Additional Baselines on Push T. }
As an alternative formulation, we reformulated Push T as an MIQCP by associating each facet with a binary variable indicating the contact mode. Using Gurobi to solve this formulation, it failed to find solutions for planning horizons beyond 4 steps (within 10 minutes), making it incomparable with \crisp. The code to reproduce these experiments is open-sourced alongside \crisp.

\textbf{LCP Solvers. }
The Transport and Waiter problems, characterized by linear dynamics and linear complementarity constraints, are naturally suited for a QP + LCP solver. We evaluated LCQpow~\cite{Hall_2024} on these tasks. For the Transport problem, LCQpow achieves an average solve time of 251~s with an average tracking error of 0.4267, and 92.5\% optimized trajectories are feasible. In comparison, as shown in~\prettyref{tab:comparison}, \crisp significantly outperforms LCQpow with an average solve time of 0.77~s, an average tracking error of $1.1 \times 10^{-4}$. For the more challenging Waiter problem, LCQPow fails to generate any feasible solutions. 
The code for reproducing these results is available in our open-source repository.

\textbf{Real-World Push T. }
To further validate the effectiveness of \crisp in delivering high-quality trajectories, we applied \crisp to a real robotic arm system for the push T task. In our validation experiments, we estimated the position of the T block during each perception cycle, using this as the new initial state for \crisp to solve them online. The solution from \crisp was then applied in an MPC feedback loop. In the physical experiments, we discretized the problem into 10 time steps with intervals of 0.05 \textup{s}. Under this setup, the average solution time for \crisp was 80 \textup{ms}, demonstrating robust and efficient performance in pushing the T block to the target position. A visualization of the process for different initial conditions is provided in Fig. \ref{fig:exp_pushT_realworld}.

\begin{figure*}[t] 
    \centering
    \includegraphics[width=1.0\linewidth, trim={0cm 0cm 0cm 0cm}, clip]{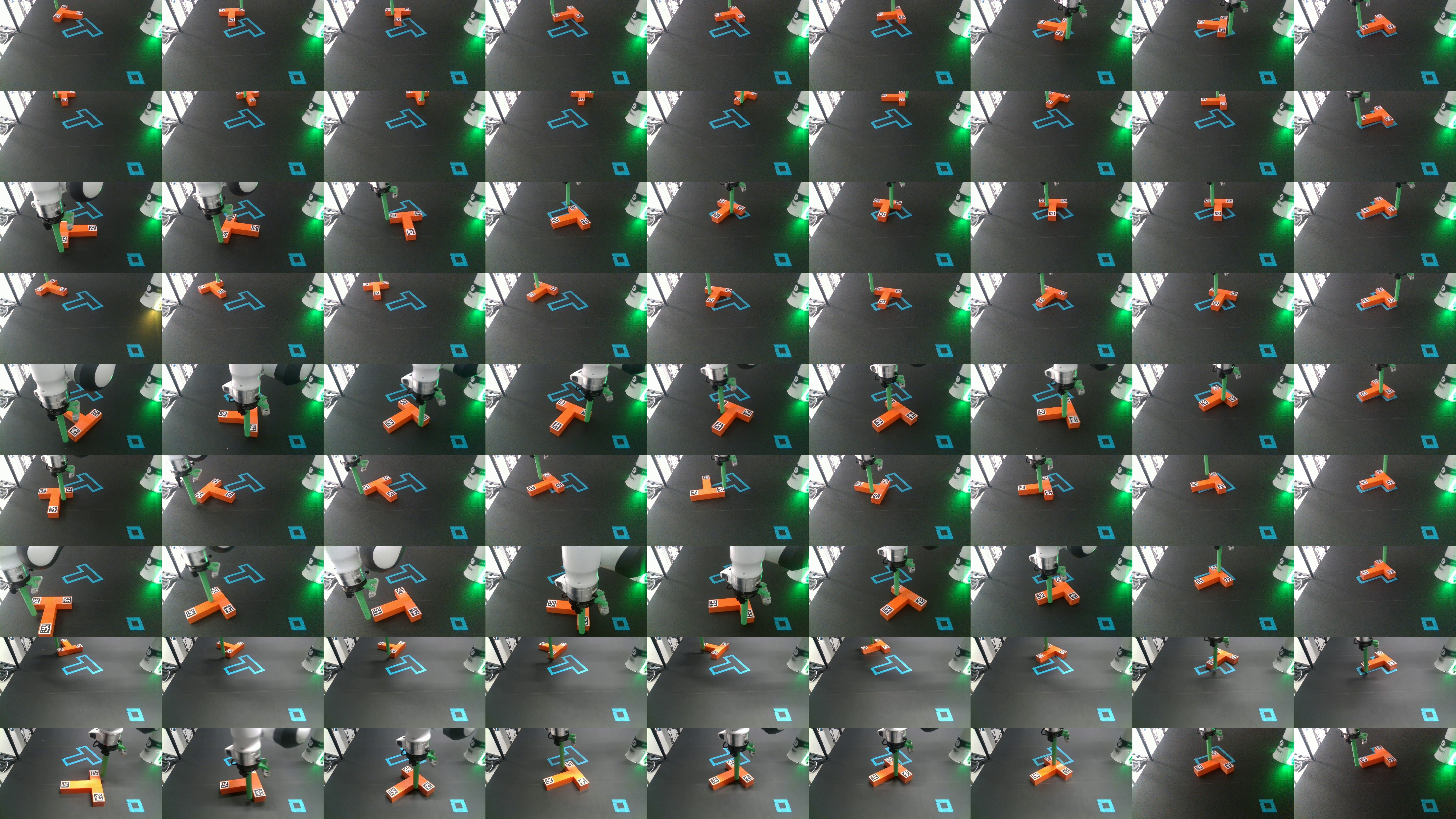}
    \caption{Key frames visualization of the real-world Push T experiments from various initial configurations (each row). Videos of the results can be found on the project website.}
    \label{fig:exp_pushT_realworld}
\end{figure*}
\section{Related Works}
\label{sec:relatedworks}
Motion planning through contact presents unique challenges due to the inherently discontinuous nature of contact interactions~\cite{TRO-WENSING-2024,TRO-LELIDEC-2024}. The literature on modeling contact can be broadly divided into smooth and rigid methods. 

\textbf{Smooth contact model. }
The principle of modeling contact in a "smooth" manner involves approximating nonsmooth contact events into smooth and continuous functions relating contact forces to states. This approach often simulates effects similar to springs~\cite{JOB-BLICKHAN-1989}, dampers~\cite{TSMC-MARHEFKA-1999}, or a combination of both~\cite{RAL-NEUNERT-2017,RAL-NEUNERT-2018}. By doing so, it allows contact forces to be expressed as functions of the robot's states and seamlessly integrated into the overall dynamic functions, providing well-defined gradient information.

\textbf{Rigid contact model: hybrid dynamics. } Hybrid systems offer a robust framework for modeling systems that exhibit both continuous and discrete behaviors~\cite{CSM-GOEBEL-2009}. These systems are characterized by their ability to switch between different dynamic regimes, or modes, depending on the contact conditions.
In the locomotion community, the control of switched systems often allows for instantaneous changes in velocity during contact events~\cite{OCS2,HUMANOIDS-FARSHIDIAN-2017,IFAC-FARSHIDIAN20171463}, while continuous dynamics govern the system at other times. This approach requires a predefined gait, which specifies a sequence of potential contact points~\cite{IROS-CHEETAH,ICRA-CHENG-2022,TRO-LOPES-2014}.

\textbf{Rigid contact model: implicit formulations.} 
There are two mathematically equivalent approaches to implicitly encode the discrete nature of hybrid systems for switching between different continuous subsystems. One approach is through Mixed Integer Programming (MIP)~\cite{SIAMReview-vielma-2015}, which introduces binary integer variables to act as switches for encoding contact events~\cite{ICHR-DEITS-2014, IROS-ACEITUNO-2017, RAL-ACEITUNO-2018}. This method is straightforward in its implementation, yet optimizing these discrete variables is challenging for gradient-based NLP solvers and often requires specialized solvers, such as Gurobi~\cite{gurobi}. Furthermore, the number of integer variables can significantly increase with the number of contact modes and the planning horizon, leading to computational intractability~\cite{SHIRAI-YUKI-2024, TAC-MARCUCCI-2021}. On the other hand, the contact force and condition can be encoded through the introduction of complementarity constraints.

On the other hand, contact forces and conditions can be encoded through the introduction of complementarity constraints. Since~\cite{posa2014ijrr-traopt-directmethod-contact}, this approach has recently gained attention because it transforms the problem into a continuous NLP problem without compromising the discrete characteristics of contact. This transformation allows for the use of modern numerical optimization tools to solve the problem effectively~\cite{le2024fast, RSS-yang-2024, aydinoglu2023icra-realtime-multicontact-mpc-admm}. However, the failure of CQs in this context can lead to significant difficulties in solving these problems~\cite{fletcher2000practical, SIOPT-FLETCHER-2006, OMS-Fletcher-2004}. \crisp aims to provide an efficient and robust solution by addressing the challenges associated with solving nonlinear contact problems that include general nonlinear complementarity constraints.
\section{Conclusion}
\label{sec:conclusion}

We presented \crisp, a primal-only numerical solver for contact-implicit motion planning that is based on sequential convex optimization. We started by uncovering the geometric insights underpinning the difficulty of solving MPCCs arising from contact-implicit planning. That motivated us to design a primal-only algorithm where each trust-region subproblem is convex and feasible by construction. For the first time, we proved sufficient conditions on the algorithm's convergence to stationary points of the merit function. With a careful C++ implementation, we benchmarked \crisp against state-of-the-art solvers on six contact-implicit planning problems, demonstrating superior robustness and capability to generate entirely new contact sequences from scratch. We believe \crisp sets a new standard and our open-source benchmarks offer significant value to contact-rich motion planning—one of the most active and challenging areas in robotics research.

\textbf{Limitations and future work. }
First, while \crisp accepts general nonlinear programming problem definitions applicable to various optimal control and motion planning problems, it requires further development to evolve into a comprehensive robotics optimization toolbox comparable to OCS2~\cite{OCS2} and CROCODDYL~\cite{mastalli20crocoddyl}. This development primarily involves integration with advanced dynamics libraries such as Pinocchio~\cite{carpentier2019pinocchio}, which would enable \crisp to handle more complex dynamics and obtain their derivative information efficiently. 
Second, although our C++ implementation is highly efficient, its adaptivity for real-time tasks could be further increased. The acquisition of gradients and Hessian matrices could be parallelized, and matrix operations could achieve significant speedups if implemented on GPUs~\cite{kang2024arxiv-strom}. Additionally, while our subproblems are already cheap as convex QPs, the overall framework's efficiency is still constrained by the speed of the underlying QP solver. As mentioned in Remark~\ref{remark:fom_qp}, we plan to test \crisp with scalable first-order QP solvers. On the theoretical side, a complete picture of the relationship between
local solutions of the original problem and stationary points
of the merit function, as well as how to check if a stationary point of the nonsmooth merit function (that \crisp finds) is locally optimal, warrants further investigation.

\section*{Acknowledgment}

We thank Michael Posa and Zac Manchester for insightful discussions about contact-rich motion planning.




\bibliographystyle{plainnat}
\bibliography{refs}

\clearpage
\onecolumn
\appendices

\section{Proof of \prettyref{thm:local_convergence}}
\label{app:proof_main_thm}

\begin{proof}
    As $x_k$ converges to $ x^\star$, we have $\|p_k\| = \|x_{k+1}-x_k\| \rightarrow 0$. By definition, there exists a constant $K$, such that $\norm{p_k}<\Delta_{\mathrm{min}}$ for all $k > K$, indicating that the trust-region constraint becomes inactive in the subproblem. In this proof, we consider only equality constraints, and the proof technique for inequality constraints is similar and will be addressed at the end.
In this case, the merit function~\eqref{eq:merit-func} becomes: 
    \bea
        \phi_1(x;\mu) \triangleq J(x) +  \sum_{i \in \mathcal{E}} \mu_i| c_i(x) |,
    \eea
    where $x \in \Real{n}$, $\calE$ denotes the equality constraints set and $|\mathcal{E}| = m_{\mathcal{E}}$. First, we rewrite the trust-region subproblem~\eqref{eq:subproblem-nonsmooth} to directly optimize over $x_{k+1}$ instead of $p_k$:
    \bea
        x_{k+1} = \displaystyle \argmin_x \{ q_{\mu,k}(x) = J(x) + \sum_{i \in \mathcal{E}}\mu_i \lvert c_i(x_k) + \nabla c_i(x_k)^\top (x - x_k) \rvert \}.
    \eea
    For the $i$th constraint $c_i\in\calE$, we define $w_{k,i}\in\Real{}$ as:
    \begin{equation}
        w_{k,i} \triangleq c_i(x_k) + \nabla c_i(x_k)^\top (x_{k+1} - x_k),
    \end{equation}
    Since the $l_1$ norm $|\cdot|$ is piecewise smooth, we separate the scalar $w_{k,i}$ into three smooth sets:
    \begin{align}
        \{w_{k,i} < 0\}, \{w_{k,i} = 0\}, \{w_{k,i} > 0\}.
    \end{align}
    Extending to the vector of all equality constraints $w_k = \bmat{ccc}w_{k,1}& \dots &w_{k,m_{\calE}}\emat\in\Real{m_\calE}$, we can divide $            \{w_k\}^{\infty}_{k=K}$ to $3^{m_{\calE}}$ smooth sets without overlapping.
    
    Since the number of such sets is finite, there must exist a subsequence $\{w_{n_k}\}^{\infty}_{k=1}$ that belongs to one of these sets, denoting as \( S \). Without loss of generality, we assume that in this set, the first \( m_1 \) components of \( w_{n_k} \) are positive, the \( m_1 + 1 \) to \( m_1 + m_2 \) components are negative, and the remaining components are zero.
    
    For any \( p \in \mathbb{R}^n \), we define the directional derivatives\footnote{We use $\downarrow$ for simplicity, but this is equivalent to $\rightarrow$ because one can easily choose $p$ as $-p$ if $t < 0$.}:
    \begin{align}
        D(\phi_1(x^\star;\mu);p) &\triangleq \lim_{t \downarrow 0} \frac{\phi_1(x^\star + tp;\mu) - \phi_1(x^\star;\mu)}{t}, \\
        D(q_{\mu,n_k}(x_{n_{k}+1});p) &\triangleq \lim_{t \downarrow 0} \frac{q_{\mu,n_k}(x_{n_{k}+1} + tp) - q_{\mu,n_k}(x_{n_{k}+1})}{t}.
    \end{align}
    Since $x_{n_k+1}$ is the global minimizer of the convex function $q_{\mu,n_k}(x)$, we have $D(q_{\mu,n_k}(x_{n_{k}+1});p) \geq 0$. On the other hand,
    \begin{align}
        D(q_{\mu,n_k}(x_{n_{k}+1});p) &= 
         \lim_{t \downarrow 0} \frac{J(x_{n_k+1} + tp) + \sum_{i\in \calE}\mu_i| w_{n_k,i} + t\nabla c_i(x_{n_k})^\top p | - J(x_{n_k+1}) - \sum_{i\in \calE}\mu_i| w_{n_k,i} |}{t} \\
        &= \nabla J(x_{n_k+1})^\top p + \lim_{t \downarrow 0} \frac{\sum_{i\in \calE}\mu_i| w_{n_k,i} + t\nabla c_i(x_{n_k})^\top p | - \sum_{i\in \calE}\mu_i| w_{n_k,i} |}{t}.\\
        & =  \nabla J(x_{n_k+1})^\top p + \sum_{i=1}^{m_1} \mu_i\nabla c_i(x_{n_k})^\top p 
         - \sum_{i=m_1+1}^{m_1+m_2} \mu_i\nabla c_i(x_{n_k})^\top p
        + \sum_{i=m_1+m_2+1}^{m} \mu_i|\nabla c_i(x_{n_k})^\top p|.\\
    \end{align}

    Using the fact that:
    \begin{equation}
        \lim_{t \downarrow 0} \frac{| a + tb | - | a |}{t} = \begin{cases}
            b, & a > 0 \\
            -b, & a < 0 \\
            |b|, & a = 0
        \end{cases}
    \end{equation}
    
    Since $J(x)$ is continuous differentiable and $\nabla c_i$ is Lipschitz, as $k \to \infty$:
    \begin{align}
        \nabla J(x_{n_k+1}) &\to \nabla J(x^\star), \\
        \nabla c_i(x_{n_k}) &\to \nabla c_i(x^\star).
    \end{align}
    
    Thus,
    \begin{align}
        D(q_{\mu,n_k}(x_{n_{k}+1});p) \xrightarrow[k \to \infty]{}
        \nabla J(x^\star)^\top p + \sum_{i=1}^{m_1} \mu_i\nabla c_i(x^\star)^\top p - \sum_{i=m_1+1}^{m_1+m_2} \mu_i\nabla c_i(x^\star)^\top p +
         \sum_{i=m_1+m_2+1}^{m} \mu_i|(\nabla c_i(x^\star)^\top p)| \geq 0.
    \end{align}

    The last inequality comes from the fact that if all the elements in a scaler sequence are nonnegative, then the limitation of the sequence is nonnegative.
    
    Now we turn to calculate $D(\phi_1(x^\star;\mu);p)$. One should be careful: since $S$ is not closed, even the first $m_1$ components of $w_{n_k}$ are always positive, the corresponding components in the converged point $w^\star = \bmat{ccc}c_1(x^\star)&\dots&c_{m_\calE}(x^\star)\emat $ will possibly converge to 0. Similar things happen to the negative part. However, the $m_1 + m_2 + 1 \sim m_\calE$ components of $w^\star$ will always be 0. To fix this, we assume that, for the $1 \sim m_1$ components of $w^\star$, $1 \sim m_{1,+}$ are positive, while $m_{1,+} + 1 \sim m_1$'s components of $w^\star$ become zero. Similarly, $m_1 + 1 \sim m_1 + m_{2,-}$ are negative, while $m_1 + m_{2,-} + 1 \sim m_1 + m_2$'s components of $w^\star$ become zero. Thus,
    
    \begin{align}
        D(\phi_1(x^\star;\mu);p) &= \lim_{t \downarrow 0} \frac{J(x^\star + tp) + \sum_{i \in \calE}\mu_i|c_i(x^\star + tp)| - J(x^\star) - \sum_{i \in \calE}\mu_i|c_i(x^\star)|}{t} \\
        &= \nabla J(x^\star)^\top p + \sum_{i=1}^{m_{1,+}} \mu_i\nabla c_i(x^\star)^\top p + \sum_{i=m_{1,+}+1}^{m_1} \mu_i|\nabla c_i(x^\star)^\top p|\\ 
        &- \sum_{i=m_1+1}^{m_1+m_{2,-}} \mu_i\nabla c_i(x^\star)^\top p+ \sum_{i=m_1+m_{2,-}+1}^{m_1+m_2} \mu_i|\nabla c_i(x^\star)^\top p| 
        + \sum_{i=m_1+m_2+1}^{m_\calE} \mu_i|\nabla c_i(x^\star)^\top p|.\\
        & \geq \nabla J(x^\star)^\top p + \sum_{i=1}^{m_1} \mu_i\nabla c_i(x^\star)^\top p 
         - \sum_{i=m_1+1}^{m_1+m_2} \mu_i\nabla c_i(x^\star)^\top p
        + \sum_{i=m_1+m_2+1}^{m_\calE} \mu_i|\nabla c_i(x^\star)^\top p|.\\
        & \geq 0
    \end{align}
    For inequality constraints we define $w_{k,i}$ as the same. We still separate $\Real{}$ into 
    \bea
    \{w_{k,i} < 0\}, \{w_{k,i} = 0\}, \{w_{k,i} > 0\}
    \eea
    and similarly, we have
    \begin{equation}
        \lim_{t \downarrow 0} \frac{| a + tb |^- - | a |^-}{t} = \begin{cases}
            0, & a > 0 \\
            -b, & a < 0 \\
            |b|^-, & a = 0
        \end{cases}
    \end{equation}
    Other parts of the proof are the same. Consequently $D(\phi_1(x^\star;\mu);p) \geq 0 \quad\forall p$, we conclude the proof.
\end{proof}

\section{Contact-implicit Formulations}\label{app:contact-implicit-formulation-for-experimental-tasks}
In this section, we provide detailed contact-implicit optimization problem formulations for the five tasks, including tracking objectives, dynamics, contact constraints, and other constraints. We believe this presentation is not only valuable for understanding the principles of contact-implicit modeling, but also represents a necessary step for the model-based community. Interested readers are encouraged to refer to these formulations to test these same problems using their own algorithms. All problem formulations are open-sourced alongside \crisp.
\subsection{Cartpole with Softwalls}\label{app:formulation-cartpole}
\textbf{Dynamics constraints. } Denote the full states of the system as:
$$v = \underbrace{[x, \theta, \dot{x}, \dot{\theta},}_{\text{states}} \underbrace{u, \lambda_1, \lambda_2]}_{\text{control}},$$
and we use subscript $i$ to denote the corresponding variable at time stamp $i$. The dynamics can be written as:
\begin{align}
(m_c + m_p)\ddot{x} + m_p\ell\ddot{\theta}\cos\theta  - (u - \lambda_1 + \lambda_2) &= 0, \label{eq:pushbot:dynamics01}\\
m_p \ell \ddot{\theta} - (\lambda_2 - \lambda_1 - m_p \ddot{x}) \cos\theta - m_p g \sin\theta &= 0.\label{eq:pushbot:dynamics02}
\end{align}
Here, $x$ denotes the cart position, while $m_c$ and $m_p$ represent the mass of the cart and the pole end, respectively. It is important to note that the contact forces $\lambda_1$ and $\lambda_2$ are incorporated into the dynamics equations despite their discrete nature. These forces are automatically triggered and controlled by the complementarity constraints introduced below, which exemplifies the principle and power of contact-implicit formulation. We utilize the Semi-Implicit Euler method \cite{wiki:Semi-implicit_Euler_method} to discretize the dynamics with $dt$. At time stamp $i$:
\begin{align}
    x_{i+1} -x_{i} - \dot{x}_{i+1}dt &= 0,\\
    \theta_{i+1} -\theta_{i} - \dot{\theta}_{i+1}dt &= 0,\\
    \dot{x}_{i+1} -x_{i} - \ddot{x}_{i}dt &= 0,\\
    \dot{\theta}_{i+1} -\theta_{i} - \ddot{\theta}_{i}dt &= 0.
\end{align}
In the above dynamics constraints, $\ddot{x}$ and $\ddot{\theta}$ can be simply derived from~\eqref{eq:pushbot:dynamics01}-(\ref{eq:pushbot:dynamics02}).

\textbf{Contact constraints.}
The complementary constraints from the contact are:
\begin{align}
 0 \leq \lambda_1 &\perp \left(\frac{\lambda_1}{k_1} + d_1 - x - \ell \sin \theta\right)\geq 0, \\
 0 \leq \lambda_2 &\perp \left(\frac{\lambda_2}{k_2} + d_2 + x + \ell \sin \theta\right)\geq 0.
\end{align}
This complementarity constraint is constructed using the relationship between the pole's position and the wall, along with the contact force. Its underlying meaning is that the wall can only provide unidirectional force. When there is no contact, $\lambda$ is zero. Upon contact, the force becomes proportional to the compression distance with coefficients $k$.

\textbf{Initial Constraints.}
The optimization problem is also subject to equality constraints that specify the initial state. 
\begin{align}
    x_0 &= x_{initial}\\
    \theta_0 &= \theta_{initial}\\
    \dot{x}_0 &= \dot{x}_{initial}\\
    \dot{\theta}_0 &= \dot{\theta}_{initial}
\end{align}

\textbf{Cost. }
For the objective function, we opt to minimize both the control effort and the terminal tracking error, each expressed in quadratic form. It is worth noting that intermediate tracking loss is not incorporated into our formulation. This decision stems from the general absence of trivial reference trajectories in contact-involved trajectory optimization tasks. Our goal is to generate a feasible trajectory from scratch that drives the robot as close as possible to the desired terminal state.
\begin{equation}
    f = \frac{1}{2} \sum_{i=1}^{N-1} v_i^T \begin{bmatrix} 0 & 0 \\ 0 & R \end{bmatrix} v_i + \frac{1}{2} v_N^T \begin{bmatrix} Q & 0 \\ 0 & 0 \end{bmatrix} v_N,
\end{equation}
where $Q\in\mathbb{R}^{4\times4}$ and $R\in\mathbb{R}^{3\times3}$ are weighting matrices for the state and control variables. All the default values of the parameters are provided in the example problems of \crisp. 

\subsection{Push Box}\label{app:formulation-pushbox}
We use $p_x$, $p_y$, and $\theta$ to represent the position and orientation of the box in world frame $\{w\}$, while the contact position $(c_x,c_y)$ and contact force $\lambda$ at each facet is defined in the body frame $\{b\}$. We assume that at any given moment, there is only one point of contact between the pusher and the box with only the corresponding normal force applied, and the entire process is quasi-static. The positive direction of the applied forces is defined with respect to the body frame $\{b\}$ of the robot.

\textbf{Dynamics constraints. }
In this work, following \cite{graesdal2024tightconvexrelaxationscontactrich}, we adapt the commonly used ellipsoidal approximation of the limit surface to model the interaction between the contact force applied
by the pusher and the resulting spatial slider velocity. The model captures the principle of the motion of the box while keeping its simplicity.
Denote the full states of the system as: 
$$v = \underbrace{[p_x, p_y, \theta,}_{\text{states}} \underbrace{c_x,c_y,\lambda_{1,y},\lambda_{2,x},\lambda_{3,y}, \lambda_{4,x}]}_{\text{control}}.$$ Then, the push box dynamics can be writen as:
\begin{align}
    \dot{p}_x &= \frac{1}{\mu m g}.\left[(\lambda_{2,x}+\lambda_{4,x})\cos\theta - (\lambda_{1,y} + \lambda_{3,y})\sin\theta\right],\\
\dot{p}_y &= \frac{1}{\mu m g}.\left[(\lambda_{2,x}+\lambda_{4,x})\sin\theta + (\lambda_{1,y} + \lambda_{3,y})\cos\theta\right],\\
\dot{\theta} &= \frac{1}{c r \mu m g}.\left[-c_y(\lambda_{2,x}+\lambda_{4,x}) + c_x(\lambda_{1,y} + \lambda_{3,y})\right],
\end{align}
where $c \in [0,1]$ is the integration constant of the box, and $r$ is the characteristic distance, typically
chosen as the max distance between a contact point and
origin of frame $\{b\}$. Discretize the continuous dynamics with the explicit Euler method, we get:
\begin{align}
    p_{x,k+1} &= p_{x,k} + \dot{p}_{x,k}dt,\\
        p_{y,k+1} &= p_{y,k} + \dot{p}_{y,k}dt,\\    \theta_{k+1} &= \theta_{k} + \dot{\theta}_kdt.
\end{align}

\textbf{Contact constraints. }
First, we ensure that the contact force can only be applied through the contact point $(c_x,c_y)$, and only pointed inward the box, utilizing the contact-implicit formulation:
\begin{align}
 0 \leq \lambda_{1,y} &\perp \left(c_y+b\right)\geq 0, \\
 0 \leq \lambda_{2,x} &\perp \left(c_x+a\right)\geq 0,\\
  0 \leq -\lambda_{3,y} &\perp \left(b-c_y\right)\geq 0,\\ 0 \leq -\lambda_{4,x} &\perp \left(a-c_x\right)\geq 0.
\end{align}
Moreover, to prevent the simultaneous application of forces on adjacent edges at corners, we introduce additional complementarity constraints. These constraints ensure that at any given time, only one force can be active. This is formulated as follows:
\begin{align}
 0 \leq \lambda_{1,y} &\perp \lambda_{2,x}\geq 0, \\
  0 \leq \lambda_{1,y} &\perp -\lambda_{3,y}\geq 0, \\
   0 \leq \lambda_{1,y} &\perp -\lambda_{4,x}\geq 0,\\
    0 \leq \lambda_{2,x} &\perp -\lambda_{3,y}\geq 0,\\
     0 \leq \lambda_{2,x} &\perp -\lambda_{4,x}\geq 0,\\
      0 \leq -\lambda_{3,y} &\perp -\lambda_{4,x}\geq 0.
\end{align}

\textbf{Initial Constraints.}
We add equality constraints to enforce the initial state of the box with $(\Bar{p}_{x,0}, \Bar{p}_{y,0},\Bar{\theta}_{0})$. Note that the initial condition is set to zero in this task.

\textbf{Cost.}
Similar to the setting of cartpole with walls, we adopt a quadratic objective function to penalize the total four contact forces $\lambda$ and the distance to the desired terminal condition $\Bar{v}_N$.
\begin{equation}
    f = \frac{1}{2} \sum_{i=1}^{N-1} v_i^T \begin{bmatrix} 0 & 0 \\ 0 & R \end{bmatrix} v_i + \frac{1}{2} (v_N-\Bar{v}_N)^T \begin{bmatrix} Q & 0 \\ 0 & 0 \end{bmatrix} (v_N-\Bar{v}_N),
\end{equation}

\subsection{Transport}\label{app:formulation-transport}
\textbf{Dynamics constraints:}
Denote all states as $$v = \underbrace{[x_1, x_2, \dot{x}_1, \dot{x}_2, p, q}_{\text{states}} \underbrace{f,u}_{\text{control}}].$$
$f$ is the friction between the two blocks with friction coefficients $\mu_1$; $p$ and $q$ are two slack variables for modeling the direction of the $f$, which is decided by the relative movements between the two blocks. The positive directions of $f$ and $x$ are both horizontally to the right. The dynamics for this problem are straghtforward:
\begin{align}
    m_1\ddot{x}_1 &= f,\\
    m_2\ddot{x}_2 &= u-f.
\end{align}
To avoid cargo $m_1$ from falling off the truck $m_2$, we need:
\beq -l_0 \leq x_1 - x_2 \leq l_0.\eeq

\textbf{Contact constraints.}
The control of the magnitude and direction of friction requires careful handling. $f$ is not an active force; it needs to be indirectly controlled through the manipulation of $u$. This indirect control determines the friction's direction, magnitude, and whether it manifests as static or kinetic friction.
\begin{align}
    \dot{x}_2 - \dot{x}_1 &= p - q,\\
    0\leq p &\perp q \geq 0,\\
    0\leq p &\perp (\mu_1 m_1g -f) \geq 0,\\
    0\leq q &\perp (f+ \mu_1m_1g) \geq 0.
\end{align}
This establishes the relationship between relative velocity and friction force while encoding the transition between static and kinetic friction in a clean way.

\textbf{Initial constraints.}
We add equality constraints to enforce the initial pose and velocity of the cart and the payload.

\textbf{Cost.}
We adopt a quadratic objective function to penalize the active force $u$ and the terminal pos and velocity tracking error.
\subsection{Push T}\label{app:formulation-pushT}
\textbf{Contact-Implicit Formulation:}
Denote the contact point in body frame (located at the COM of T block) as $(c_x,c_y)$, we have 8 different contact mode that exhibit their specific complementarity constraints.
\begin{align}
    -2l\leq &c_x \leq 2l, -d_cl \leq c_y \leq (4-d_c)l,\\
    \lambda_1 \neq 0 &\Rightarrow c_y - (4-d_c)l = 0,\\
    \lambda_2 \neq 0 &\Rightarrow |c_x - 2l| + |c_y-(4-d_c)l| + |c_y-(3-d_c)l| - l = 0,\\
    \lambda_3 \neq 0 &\Rightarrow |c_x - 2l| + |c_x-0.5l| + |c_y-(3-d_c)l| - 1.5l = 0,\\
    \lambda_4 \neq 0 &\Rightarrow |c_x-0.5l| + |c_y-(3-d_c)l| + |c_y + d_cl| - 3l = 0,\\
    \lambda_5 \neq 0 &\Rightarrow |c_y + d_cl| + |c_x + 0.5l| + |c_x - 0.5l| - l = 0,\\
    \lambda_6 \neq 0 &\Rightarrow |c_x + 0.5l| + |c_y-(3-d_c)l| + |c_y + d_cl| - 3l = 0,\\
    \lambda_7 \neq 0 &\Rightarrow |c_x + 2l| + |c_x+0.5l| + |c_y-(3-d_c)l| - 1.5l = 0,\\
    \lambda_8 \neq 0 &\Rightarrow |c_x + 2l| + |c_y-(4-d_c)l| + |c_y-(3-d_c)l| - l = 0.
\end{align}
Designate the positive direction of contact forces as rightward for the $x$
axis and upward for the $y$ axis. And introduce the following slack variables $v$ and $w$ complementing each other to smooth the absolute operation $|\cdot|$ as we did in the transport example:
\begin{align}
    v_1 - w_1 &= c_x -2l,\\
    v_2 - w_2 &= c_y -(4-d_c)l,\\
    v_3 - w_3 &= c_y -(3-d_c)l,\\
    v_4 - w_4 &= c_x -0.5l,\\
    v_5 - w_5 &= c_y + d_cl,\\
    v_6 - w_6 &= c_x + 0.5l,\\
    v_7 - w_7 &= c_x + 2l,\\
    0 \leq v_1 &\perp w_1 \geq 0,\\
    0 \leq v_2 &\perp w_2 \geq 0,\\
    0 \leq v_3 &\perp w_3 \geq 0,\\
    0 \leq v_4 &\perp w_4 \geq 0,\\
    0 \leq v_5 &\perp w_5 \geq 0,\\
    0 \leq v_6 &\perp w_6 \geq 0,\\
    0 \leq v_7 &\perp w_7 \geq 0,\\
    0 \leq v_8 &\perp w_8 \geq 0,
\end{align}
which can be equivalently written as:
\begin{align}
    v_1 + w_1 &= |c_x -2l|,\\
    v_2 + w_2 &= |c_y -(4-d_c)l|,\\
    v_3 + w_3 &= |c_y -(3-d_c)l|,\\
    v_4 + w_4 &= |c_x -0.5l|,\\
    v_5 + w_5 &= |c_y + d_cl|,\\
    v_6 + w_6 &= |c_x + 0.5l|,\\
    v_7 + w_7 &= |c_x + 2l|,\\
\end{align}

Then we have the complementarity constraints:
\begin{align}
    0&\leq -\lambda_1 \perp (4-d_c)l - c_y \geq 0,\\
    0&\leq -\lambda_2  \perp v_1 + w_1 + v_2 + w_2 + v_3  + w_3 - l \geq 0 ,\\
    0&\leq \lambda_3  \perp v_1 + w_1 + v_3 + w_3 + v_4 + w_4 - 1.5l \geq 0,\\
    0&\leq -\lambda_4  \perp v_3 + w_3 + v_4 + w_4 + v_5 + w_5 - 3l \geq 0,\\
    0&\leq \lambda_5  \perp v_4 + w_4 + v_5 + w_5 + v_6 + w_6 - l \geq 0,\\
    0&\leq \lambda_6  \perp v_3 + w_3 + v_5 + w_5 + v_6 + w_6 - 3l \geq 0,\\
    0&\leq \lambda_7  \perp v_3 + w_3 + v_6 + w_6 + v_7 + w_7 - 1.5l \geq 0,\\
    0&\leq \lambda_8  \perp v_2 + w_2 + v_3 + w_3 + v_7 + w_7 - l \geq 0.
\end{align}
Furthermore, if we want to avoid simultaneous application of forces on adjacent edges at corners,
we introduce additional complementarity constraints. These constraints ensure that at any given time, only one force can be active:
\begin{align}
    \lambda_i \perp \lambda_j, \forall i,j = 1\ldots 8, \text{and}\,i\neq j.
\end{align}

With the complementarity constraints derived above, we can write the unified dynamics regardless of the contact point.
\begin{align}
\dot{p}_x &= \frac{1}{\mu m g}.\left[(\lambda_2+\lambda_4+\lambda_6+\lambda_8)\cos\theta - (\lambda_1 + \lambda_3+\lambda_5+\lambda_7)\sin\theta\right],\\
\dot{p}_y &= \frac{1}{\mu m g}.\left[(\lambda_2+\lambda_4+\lambda_6+\lambda_8)\sin\theta + (\lambda_1 + \lambda_3+\lambda_5+\lambda_7)\cos\theta\right],\\
\dot{\theta} &= \frac{1}{c r \mu m g}.\left[-c_y(\lambda_2+\lambda_4+\lambda_6+\lambda_8) + c_x(\lambda_1 + \lambda_3+\lambda_5+\lambda_7)\right],    
\end{align}
where $c \in [0,1]$ is the integration constant of the box, and $r$ is the characteristic distance, typically
chosen as the max contact distance.

\subsection{Hopper}\label{app:formulation-hopper}
The hopper exhibits two-phase dynamics, which we will unify through the complementarity constraints to decide whether it is in the fly or stance.

\textbf{Dynamics constraints. }
We define the full states of the hopping robot as:
$$v = \underbrace{[p_x, p_y, q_x, q_y, \theta, r, \dot{p}_x,\dot{p}_y,}_{\text{states}} \underbrace{u_1,u_2}_{\text{control}}].$$ 
$(p_x, p_y)$ and $(q_x, q_y)$ are the positions of the head and tail of the hopper respectively. $r$ is the leg compression distance. $u_1$ and $u_2$ are the controls for leg angular velocity and thrust, each active only in one phase.
Then, the hopper's dynamics can be written as:
\begin{itemize}
    \item Flight Phase:
    \begin{subequations} \label{eq:2dhopper_flight}
        \begin{align}
        \ddot{p}_x &= 0,\label{eq:2dhopper_flight:01}\\
        \ddot{p}_y &=-g,\label{eq:2dhopper_flight:02}\\
        q_x & = p_x + l_0\sin\theta,\label{eq:2dhopper_flight:03}\\
        q_y &= p_y - l_0\cos\theta,\label{eq:2dhopper_flight:04}\\
        \dot{\theta} &=u_1. \label{eq:2dhopper_flight:05}
    \end{align}
    \end{subequations}
    \item Stance Phase:
    
    In the stance phase, $(q_x, q_y)$ is fixed at the contact point, and we can get the following dynamics equations:
    \begin{subequations}\label{eq:2dhopper_stance}
        \begin{align}
        &m\ddot{p}_x + u_2\sin\theta = 0, \label{eq:2dhopper_stance:01}\\
        &m\ddot{p}_y - u_2\cos\theta + mg = 0,\label{eq:2dhopper_stance:02}\\
        &(l_0-r)\cos\theta - p_y + q_y = 0,\label{eq:2dhopper_stance:03}\\
        &(l_0-r)\sin\theta - q_x + p_x = 0.\label{eq:2dhopper_stance:04}\\
         &(l_0-r)^2-(p_x -q_x)^2 - (p_y - q_y)^2 = 0,\label{eq:2dhopper_stance:05}
        \end{align}
    \end{subequations}
    Also, $q_x$ and $q_y$ are fixed by:
    \begin{subequations}\label{eq:2dhopper_stance_02}
        \begin{align}
        &\dot{q}_x = 0,\label{eq:2dhopper_stance_02:01}\\
        &\dot{q}_y = 0.\label{eq:2dhopper_stance_02:02}
    \end{align}
    \end{subequations}
\end{itemize}
Since during the flight phase, $r$ remains 0, while $u_2$ can only be greater than 0 during the stance phase. We can observe that ~\eqref{eq:2dhopper_flight:01}-\eqref{eq:2dhopper_flight:04} are actually the same as~\eqref{eq:2dhopper_stance:01}-\eqref{eq:2dhopper_stance:04}, except that they involve these phase-specific variables.
To unify the dynamics  into a single optimization problem, we construct the following complementarity constraints:
\begin{subequations}\label{eq:2dhopper_comp01}
    \begin{align}
   0 \leq r \perp q_y \geq 0,\label{eq:2dhopper_comp01:01}\\
   0 \leq u_2 \perp q_y \geq 0,\label{eq:2dhopper_comp01:02}\\
    0 \leq u_1^2 \perp r \geq 0. \label{eq:2dhopper_comp01:03}
\end{align}
\end{subequations}
These constraints ensure that the leg maintains its original length during the flight phase while allowing contraction ($r \geq 0$) when in contact with the ground. These conditions are complementary, enabling us to control the switching of dynamics between phases. Specifically, we transform the dynamics of different phases to:
    \begin{align}
        &m\ddot{p}_x + u_2\sin\theta = 0,\\
        &m\ddot{p}_y - u_2\cos\theta + mg  =0,\\
        &(l_0-r)\cos\theta - p_y + q_y=0,\\
        &(l_0-r)\sin\theta - q_x + p_x=0,\\
        &r.\dot{q}_x = 0,\\
        &r.\dot{q}_y = 0,\\
        &q_y.(\theta-u_1)=0,\\
        &r.\left((l_0-r)^2-(p_x -q_x)^2 - (p_y - q_y)^2\right)=0.
    \end{align}
These unified dynamics, together with the complementary constraints~\eqref{eq:2dhopper_comp01:01}-\eqref{eq:2dhopper_comp01:03}, provide a way to write the hopper problem into one optimization problem in the contact-implicit format.
Discretize the dynamics, we get:
    \begin{align}
        &m\frac{\dot{p}_{x,k+1}-\dot{p}_{x,k}}{dt} + u_{2,k}\sin\theta_k = 0,\\
        &m\frac{\dot{p}_{y,k+1}-\dot{p}_{y,k}}{dt} - u_{2,k}\cos\theta_k + mg  =0,\\
        &(l_0-r_k)\cos\theta_k - p_{y,k} + q_{y,k}=0,\\
        &(l_0-r_k)\sin\theta_k - q_{x,k} + p_{x,k}=0,\\
        &r_k.\frac{q_{x,k+1}-q_{x,k}}{dt} = 0,\\
        &r_k.\frac{q_{y,k+1}-q_{y,k}}{dt} = 0,\\
        &r_k.\left((l_0-r_k)^2-(p_{x,k} -q_{x,k})^2 - (p_{y,k} - q_{y,k})^2\right)=0.
    \end{align}

\textbf{Contact constraints. }
We introduce additional constraints to ensure physical consistency and feasibility:
\begin{equation}
    0 \leq r \leq r_0 \label{eq:2dhopper_leg_length}.
\end{equation}
The leg contraction $r$ within a feasible range. The constant $r_0$ represents the maximum allowable contraction and is chosen such that $r_0 < l_0$, where $l_0$ is the original leg length.

These additional constraints, in conjunction with the complementarity constraint, provide a comprehensive formulation that captures the essential physical characteristics of the hopping robot while maintaining the unified representation of both flight and stance phases.

\textbf{Initial constraints. }
The hopper is released from 1.5\,$\textup{m}$ height with zero initial speed and the leg vertical.

\textbf{Cost. }
The cost is a quadratic loss to force the robot to jump to and stop at 2\,$\textup{m}$ with zero height while penalizing the control effort of the angular velocity $u_1$ and thrust $u_2$.

\subsection{Waiter}\label{app:formulation-waiter}
\textbf{Dynamics constraints. }
Denote the full states in the waiter problem:
$$v = \underbrace{[x_1, x_2, \dot{x}_1, \dot{x}_2, v, w,p,q ,}_{\text{states}} \underbrace{\lambda_N, u,f_p,f_t, N}_{\text{control}}].$$
In this scenario, \((x_1, x_2)\) and \((\dot{x}_1, \dot{x}_2)\) represent the position and velocity of the plate and pusher, respectively. The variables \(z, w, p,\) and \(q\) are slack variables used to model the friction between the plate and table, as well as between the pusher and plate, similar to the transport example. For control, we have direct control over the normal force applied by the pusher, denoted as \(\lambda_N\), and the horizontal thrust, \(u\). Additionally, there is indirect control over the frictional forces between the plate and table, \(f_t\), and between the pusher and plate, \(f_p\). The support force exerted by the table is \(N\). The system dynamics are described by the following equations:

\begin{subequations}\label{eq:waiter}
    \begin{align}
    & m_2 \ddot{x}_2 = u - f_p, \\
    & m_1 \ddot{x}_1 = f_p - f_t, \\
    & N + \lambda_N = m_1 g, \label{eq:waiter:staticforcebalance} \\
    & \lambda_N (x_2 - x_1 + l_0) \leq m_1 g l_0, \label{eq:waiter:torquebalance} \\
    & N \geq 0, \ \lambda_N \geq 0, \\
    & x_2 \geq 0, \ x_2 - x_1 \leq l_0. \label{eq:waiter:pusherposlimit}
    \end{align}
\end{subequations}

Here, ~\eqref{eq:waiter:staticforcebalance} ensures vertical force balance. ~\eqref{eq:waiter:torquebalance} prevents tilting around the leftmost contact point between the table and plate. Finally, ~\eqref{eq:waiter:pusherposlimit} restricts the pusher's position to remain clear of the table while staying on the overhanging part of the plate.

\textbf{Contact constraints. }
The following complementarity constraints control the magnitude and direction of the friction $f_t$ and $f_p$ 
\begin{align}
    \dot{x}_1 &= z - w,\\
    \dot{x}_2 - \dot{x}_1 &= p - q,\\
    0\leq z &\perp w \geq 0,\\    
    0\leq p &\perp q \geq 0,\\
    0\leq z &\perp (N\mu_1 - f_t) \geq 0,\\
    0\leq w &\perp (N\mu_1 + f_t) \geq 0,\\
    0\leq p &\perp (\mu_2\lambda_N - f_p) \geq 0,\\
    0\leq q &\perp (\mu_2\lambda_N +f_p) \geq 0.
\end{align}
These equations ensure the friction cone constraints, which automatically manage the magnitude, nature (static or kinetic), and direction of the frictions.

\textbf{Initial constraints.}
In the waiter problem, the initial state is defined with the pusher positioned next to the table, and the 14-meter-long plate has a 1-meter overhang.

\textbf{Cost. }
The objective is to achieve a terminal position and velocity of the plate and pusher, such that the plate is pulled out until its COM aligns with the pusher next to the table, while also minimizing control efforts.
\newpage
\section{Visualization of Numerical Results}\label{app:additional-numerical-result-visualize}
\begin{figure*}[h] 
    \centering
    \includegraphics[width=0.8\linewidth, trim={0cm 0cm 0cm 0cm}, clip]{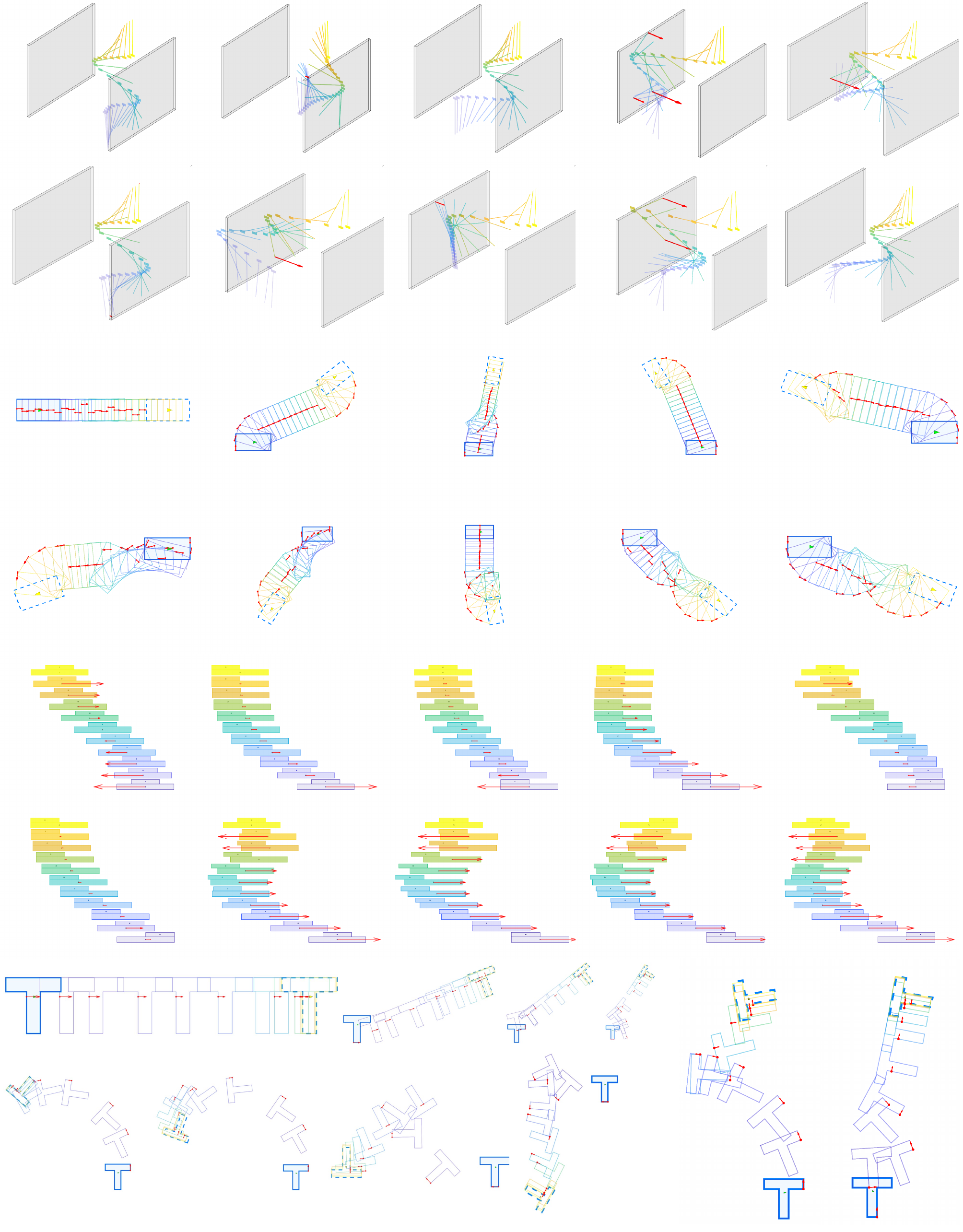}
    \caption{Visualization of more trajectories solved by \crisp in cartpole with soft walls (row 1-2), push box (row 3-4), payload transport (row 5-6), push T (row 7-8) under different initial conditions.}
    \label{fig:exp}
\end{figure*}

\end{document}